\newtheorem{theo}[subsubsection]{Theorem}
\newtheorem{lem}[subsubsection]{Lemma}
\newtheorem{prop}[subsubsection]{Proposition}
\newtheorem{exam}[subsubsection]{Example}
\newtheorem{rem}[subsubsection]{Remark}
\newcommand{\E}{{\cal E}}
\newcommand{\A}{{\cal A}}
\newcommand{\cC}{{\cal C}}
\newcommand{\cG}{{\cal G}}
\newcommand{\caH}{{\cal H}}
\newcommand{\cJ}{{\cal J}}
\newcommand{\cM}{{\cal M}}
\newcommand{\cN}{{\cal N}}
\newcommand{\End}{{\cal E}{\it nd}}
\newcommand{\Vect}{{\cal V}{\it ect}}
\newcommand{\sve}{{\scriptscriptstyle {\vee}}}
\begin{document}
\author{Alexei Davydov}
\title{Nuclei of categories with tensor products} \maketitle
\begin{center}
Department of Mathematics, Division of Information and
Communication Sciences, Macquarie University, Sydney, NSW 2109,
Australia
\end{center}
\begin{center}
davydov@math.mq.edu.au
\end{center}
\begin{abstract}
Following the analogy between algebras (monoids) and monoidal categories the construction of nucleus for non-associative algebras is simulated on the categorical level.

Nuclei of categories of modules are considered as an example.
\end{abstract}
\section{Introduction}
The property
\begin{equation}\label{peq}
xJ(y,z,w) - J(xy,z,w) + J(x,yz,w) - J(x,y,zw) + J(x,y,z)w = 0
\end{equation}
of the {\em associator} $J(x,y,z) = x(yz) - (xy)z$ guarantees that
the subspaces
\begin{equation}\label{nl}
N_l(A) = \{a\in A:\ J(a,x,y) = 0\ \forall x,y\in A\},
\end{equation}
\begin{equation}\label{nm}
N_m(A) = \{a\in A:\ J(x,a,y) = 0\ \forall x,y\in A\},
\end{equation}
\begin{equation}\label{nr}
N_r(A) = \{a\in A:\ J(x,y,a) = 0\ \forall x,y\in A\}
\end{equation}
of the algebra $A$ are associative subalgebras. We call these subalgebras {\em left, middle} and {\em right nucleus} respectively.

In section \ref{nuclsemi}  we categorify these construction by defining left, middle and right nuclei $\cN_l(\cG), \cN_m(\cG),  \cN_r(\cG) $ for a category $\cG$ with a tensor product ({\em magmoidal} category). The tensor product transports to the nuclei. Moreover there is a natural choice of associativity constraint making them {\em semigroupal} categories (the difference between semigroupal and monoidal is the lack of unit object).

The categorical analogs of the inclusions $N_l(A) ,N_m(A), N_r(A) \subset A$ are the tensor product preserving (forgetful) functors $\cN_l(\cG), \cN_m(\cG),  \cN_r(\cG) \to \cG$.

If an algebra $A$ was already associative, its nuclei will coincide with it. For a semigroupal category $\cG$ we only get semigroupal functors $\cG \to \cN_l(\cG), \cN_m(\cG),  \cN_r(\cG)$, which split the corresponding forgetful functor. Semigroupal structures on $\cG$ are in 1-to-1 correspondence with splittings of the forgetful functor.

If a (non-associative) algebra $A$  has a unit (an element $e\in A$ such that $ex = xe = x$ for any $x\in A$) then the unit belongs to all nuclei $N_\epsilon (A)$ making them associative unital. In section \ref{unit} we modify the constructions of nuclei for the case of a magmoidal category with unit. The modified constructions give monodal categories $\cN^1_l(\cG), \cN^1_m(\cG),  \cN^1_r(\cG)$.

Constructions of self-symmetries of categorical nuclei which we describe in section \ref{funcnuc} do not have any analogs in algebra. Here we associate, to any natural transformation of the tensor product functor on $\cG$, monoidal autoequivalences of the nuclei of $\cG$.

In section \ref{subnucl} we define certain subcategories of nuclei and show how they are acted upon by the self-symmetries of section \ref{funcnuc}.

Note that in the classical algebraic construction the multiplication map of an algebra $A$ makes it a left $N_l(A)$-module, a right $N_r(A)$-module, and a left- and right- $N_m(A)$-module. Moreover, left multiplications by elements of $N_l(A)$ commute with right multiplications by elements of $N_m(A)$ and $N_r(A)$; while right multiplications by elements of $N_r(A)$ commute with right multiplications by elements of $N_m(A)$ and $N_l(A)$. Finally, the multiplication map $A\otimes A\to A$ factors through $A\otimes_{N_m(A)}A$ and is a morphism of $N_l(A)$-$N_r(A)$-bimodules.
In section \ref{actnucl} we categorify some of these constructions by defining actions of nuclei $\cN(\cG)$ on the category $\cG$.

This could give the impression that coherence laws of monoidal categories guarantee that categorifications of algebraic constructions are nicely coherent themselves. Unfortunately it is not always true, especially if commutativity enters the picture. For example, if $A$ is a commutative (non-associative) algebra, then different nuclei are related as follows $$N_l(A) = N_r(A) \subset N_m(A).$$ Indeed, the following chain of equalities proves that an element $a$ of $N_l(A)$ belongs to $N_m(A)$: $$x(ay) = (ay)x = a(yx) = a(xy) = (ax)y = (xa)y.$$ To see that $a$ belongs to $N_r(A)$ one needs a slightly longer chain: $$x(ya) = x(ay) = ... = (xa)y = (ax)y = a(xy) = (xy)a.$$
On the level of categories we have the following relations between nuclei of a category $\cG$ with commutative tensor product: $$\cN_l(\cG) \simeq \cN_r(\cG) \to \cN_m(\cG).$$ But the functors between different nuclei are not monoidal.

In the second part of the paper we deal with a similar construction making maps between associative algebras multiplicative.
Let $f:A\to B$ be a linear map of associative algebras. Define $m_f(x,y) = f(xy) - f(x)f(y)$. The property
$$f(x)m_f(y,z) - m_f(xy,z) + m_f(x,yz) - m_f(x,y)f(z) = 0$$ implies that the subspaces of $A$ $$M_l(f) = \{a\in
A:\ m_f(a,x) = 0\ \forall x\in A\},\quad M_r(f) = \{a\in A:\ m_f(x,a) = 0\ \forall x\in A\}$$ are subalgebras (we call them the left and right {\em multiplicants} of the map $f$).
It is straightforward to see that the restrictions of $f$ to $M_l(f)$ and $M_r(f)$ are homomorphisms of algebras.

Again we face problems when we try to bring  in commutativity. For example, the left and right multiplicants $M_l(f), M_r(f)$ of a linear map $f:A\to B$ between commutative algebras coincide. The following chain of equalities shows that any $a\in M_l(f)$ belongs to $M_r(f)$: $$f(xa) = f(ax) = f(a)f(x) = f(x)f(a).$$ But the equivalence $\cM_l(F),\to \cM_r(F)$ between multiplicants of a functor $F:\cG\to \caH$ between symmetric monoidal categories fails to be monoidal.

\section*{Acknowledgment}
 The work on the paper was supported by
Australian Research Council grant DP00663514. The author thanks members of the Australian Category Seminar for stimulating discussions.
Special thanks are due to R. Street for invaluable support during the work on the paper.

\section{Nuclei of a category with a tensor product}
\subsection{Nuclei as semigroupal categories}\label{nuclsemi}
Let $\cG$ be a category with a tensor product functor $\otimes:\cG\times\cG\to\cG$ without any associativity
condition ({\em magmoidal} category). Following (\ref{nl}) define the {\em left nucleus} $\cN_l(\cG)$ of $\cG$ as the category of pairs
$(A,a)$, where $A\in\cG$ and $a$ denotes a family of isomorphisms $$a_{X,Y}:A\otimes(X\otimes Y)\to(A\otimes
X)\otimes Y$$ natural in $X,Y$. We will consider a morphism $f:A\to B$ from $\cG$ as a morphism $(A,a)\to(B,b)$ in
$\cN_l(\cG)$ if the diagram

\xymatrix{ A\otimes(X\otimes Y) \ar[d]^{f\otimes(X\otimes Y)}\ar[r]^{a_{X,Y}} & (A\otimes X)\otimes Y
\ar[d]^{(f\otimes X)\otimes Y}
\\
B\otimes(X\otimes Y) \ar[r]^{b_{X,Y}} & (B\otimes X)\otimes Y }

is commutative for any $X,Y\in\cG$. Introduce the tensor product on the category $\cN_l(\cG)$ by
$(A,a)\otimes(B,b) = (A\otimes B,a|b)$, where $a|b$ is defined by the pentagon diagram:

$$
\xygraph{ !{0;/r4.5pc/:;/u4.5pc/::}[]*+{A\otimes(B\otimes(X\otimes Y))} (
  :[u(1.1)r(1.7)]*+{(A\otimes B)\otimes(X\otimes Y)} ^{a_{B,X\otimes Y}}
  :[d(1.1)r(1.7)]*+{((A\otimes B)\otimes X)\otimes Y}="r" ^{(a|b)_{X,Y}}
  ,
  :[r(.6)d(1.5)]*+!R(.3){A\otimes((B\otimes X)\otimes Y)} ^{A\otimes b_{X,Y}}
  :[r(2.2)]*+!L(.3){(A\otimes(B\otimes X))\otimes Y} ^{a_{B\otimes X,Y}}
  : "r" ^{a_{B,X}\otimes Y}
)
}$$

The following diagram implies that for two morphisms $f:(A,a)\to(B,b), g:(A',a')\to(B',b')$ the tensor product
$f\otimes g$ is a morphism between tensor products of pairs $(A,a)\otimes(A',a')\to(B,b)\otimes(B',b')$.

\begin{equation}\label{tpm}
\xygraph{ !{0;/r6pc/:;/u6pc/::}[]*+{A(B(XY))}="tl" (
  :[u(.7)rr]*+{(AB)(XY)} ^{a_{B,XY}}
  ( :[d]*+{(A'B')(XY)}="t" _{(fg)(XY)}
  )
  :[d(.7)rr]*+{((AB)X)Y}="tr" ^{(a|b)_{X,Y}}
  ( :[d]*+{((A'B')X)Y}="r" ^{((fg)X)Y}
  )
  ,
  :[r(.8)d]*+{A((BX)Y)}="tlb" ^(.4){Ab_{X,Y}}
  ( :[d]*+{A'((B'X)Y)}="lb" ^{f((gX)Y)}
  )
  :[r(2.4)]*+{(A(BX))Y}="trb" ^{a_{BX,Y}}
  ( :[d]*+{(A'(B'X))Y}="rb" _{(f(gX))Y}
  )
  : "tr" ^(.6){a_{B,X}Y}
  ,
  :[d]*+{A'(B'(XY))} _{f(g(XY))}
  ( : "t" ^(.6){a'_{B',XY}} |!{"tl";"tlb"}\hole
    : "r" ^(.4){(a'|b')_{X,Y}} |!{"trb";"tr"}\hole
    ,
    : "lb" _{A'b'_{X,Y}}
    : "rb" _{a'_{B'X,Y}}
    : "r" _{a'_{B',X}Y}
  )
)
}
\end{equation}

In particular, the forgetful functor $\cN_l(\cG)\to\cG$ sending $(A,a)$ to $A$ is magmoidal.

It follows from the diagram below that for $(A,a),(B,b),(C,c)\in\cN_l(\cG)$ the isomorphism
$a_{B,C}:A\otimes(B\otimes C)\to(A\otimes B)\otimes C$ belongs to the nucleus $\cN_l(\cG)$. Indeed, it is equivalent to the commutativity of the square

\xymatrix{ (A\otimes(B\otimes C))\otimes(X\otimes Y) \ar[d]^{a_{B,C}\otimes(X\otimes Y)}\ar[rr]^{(a|(b|c))_{X,Y}} & &
((A\otimes(B\otimes C))\otimes X)\otimes Y \ar[d]^{(a_{B,C}\otimes X)\otimes Y}
\\
((A\otimes B)\otimes C)\otimes(X\otimes Y) \ar[rr]^{((a|b)|c)_{X,Y}} & & (((A\otimes B)\otimes C)\otimes X)\otimes Y
}

which is one of the faces of the diagram ($K_5$ in Stasheff's notations \cite{st}):

$$
\xygraph{ !{0;/r6.5pc/:;/u6.5pc/::} []*+{A(B(C(XY)))} ( :[u(1.7)rr]*+{A((BC)(XY))} ^{Ab_{C,XY}}
  (
    :[d(.7)]*+{(A(BC))(XY)} ^(.7){a_{BC,XY}}
    (
      :[d(.9)l]*+{((AB)C)(XY)}="lt" _{a_{B,C}(XY)}
      :[d(.9)r]*+{(((AB)C)X)Y)}="tb" _{((a|b)|c)_{X,Y}}
    ,
      :[d(.9)r]*+{((AB)C)(XY)}="rt" ^{(a|(b|c))_{X,Y}}
      :"tb" ^{(a_{B,C}X)Y}
    )
  , :[d(1.7)rr]*+{A(((BC)X)Y)}="rm" ^{A(b|c)_{X,Y}}
    :[r(.4)d(.8)]*+{(A((BC)X))Y}="rb" ^{a_{(BC)X,Y}}
    : "rt" ^(.7){(a_{BC}X)Y}
  )
,
  :[l(.4)d(.8)]*+{(AB)(C(XY))}="ll" _{a_{B,C(XY)}}(: "lt" _(.7){(a|b)_{C,XY}})
  :[r(.65)d(1)]*+{(AB)((CX)Y))}="lb" _{(AB)c_{X,Y}}
  :[r(1.75)d]*+{((AB)(CX))Y}="bb" _{(a|b)_{CX,Y}}
  : "tb" _{(a|b)_{C,X}Y}
,
  :[d(1)r(.65)]*+!R(.5){A(B((CX)Y))}="lm" |!{"ll";"lt"}\hole ^(.7){A(Bc_{X,Y})}
  ( : "lb" ^{a_{B,(CX)Y}}
  ,
    :[r(2.7)]*+{A((B(CX))Y)} _(.3){Ab_{CX,Y}}
        |*+{\hole}
    ( :"rm" ^(.3){A(b_{C,X}Y)} |!{"rb";"rt"}\hole
    , : [d(.8)r(.4)]*+{(A(B(CX)))Y} _{a_{B(CX),Y}}
      ( : "rb" _{(Ab_{C,X})Y} , : "bb" ^{a_{B,CX}Y} )
    )
  )
)
}$$

Two other square faces of this diagram are commutative by the naturality while the pentagon faces commute by the definition of the tensor product in $\cN_l(\cG)$. Thus
$$\phi_{(A,a),(B,b),(C,c)}:(A,a)\otimes((B,b)\otimes(C,c))\to ((A,a)\otimes(B,b))\otimes(C,c)$$ defined by $\phi_{(A,a),(B,b),(C,c)} = a_{B,C}$, is a morphism in the category $\cN_l(\cG)$.

\begin{theo}\label{semigr}
The category $\cN_l(\cG)$ with tensor product and associativity constraint defined above is semigroupal.
\end{theo}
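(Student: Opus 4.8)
The plan is to supply the three remaining ingredients of a semigroupal structure — that $\otimes$ is a bifunctor $\cN_l(\cG)\times\cN_l(\cG)\to\cN_l(\cG)$, that $\phi$ is a natural isomorphism, and that the pentagon identity holds — and to do so by reducing every equation between arrows to the base category $\cG$. The tool is the observation that the forgetful functor $U:\cN_l(\cG)\to\cG$ is \emph{faithful}: by construction a morphism $(A,a)\to(B,b)$ is exactly a $\cG$-morphism $A\to B$ satisfying a commuting square, so two parallel arrows of $\cN_l(\cG)$ coincide as soon as their underlying $\cG$-morphisms do. Since $U$ is moreover strictly magmoidal — $U((A,a)\otimes(B,b)) = A\otimes B$ and $U(f\otimes g) = f\otimes g$ — each coherence equation in $\cN_l(\cG)$ becomes, after applying $U$, a diagram in $\cG$ assembled from $\otimes$, the structure isomorphisms, and the composite families $a|b$.

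First I would complete the proof that $\otimes$ is a functor. It is well-defined on objects because $(a|b)_{X,Y}$, being the composite $(a_{B,X}\otimes Y)\circ a_{B\otimes X,Y}\circ(A\otimes b_{X,Y})\circ a_{B,X\otimes Y}^{-1}$ read off the defining pentagon, is natural in $X$ and $Y$ and is an isomorphism, hence indeed an object of $\cN_l(\cG)$. Diagram (\ref{tpm}) already shows $f\otimes g$ is an $\cN_l(\cG)$-morphism. Preservation of composites and of identities is then automatic: the two sides have equal underlying $\cG$-morphisms because $\otimes$ is a bifunctor on $\cG$, and $U$ is faithful.

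Next comes the associator. We have already seen, via the $K_5$ diagram, that $\phi_{(A,a),(B,b),(C,c)} = a_{B,C}$ is an arrow of $\cN_l(\cG)$; it is an isomorphism there because its $\cG$-inverse $a_{B,C}^{-1}$ satisfies the inverted defining square, giving $\phi$ a two-sided inverse in $\cN_l(\cG)$. For naturality, given morphisms $f:(A,a)\to(A',a')$, $g:(B,b)\to(B',b')$, $h:(C,c)\to(C',c')$, I would apply $U$ and factor $f\otimes(g\otimes h) = (\mathrm{id}_{A'}\otimes(g\otimes h))\circ(f\otimes\mathrm{id}_{B\otimes C})$, then move $a'_{B',C'}$ to the left across the first factor by naturality of $a'$ in its two variables and across the second factor by the defining condition that $f$ is an $\cN_l(\cG)$-morphism (evaluated at $B$ and $C$); bifunctoriality of $\otimes$ in $\cG$ then reassembles the result as $((f\otimes g)\otimes h)\circ a_{B,C}$, which is the naturality square.

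Finally the pentagon, which carries the only real content — and it dissolves as well. Evaluating the pentagon for $(A,a),(B,b),(C,c),(D,d)$ and applying $U$, the left-hand side is $(a|b)_{C,D}\circ a_{B,C\otimes D}$ and the right-hand side is $(a_{B,C}\otimes D)\circ a_{B\otimes C,D}\circ(A\otimes b_{C,D})$: exactly the commutativity of the pentagon \emph{defining} $(a|b)_{X,Y}$, specialized to $X=C$, $Y=D$. Thus the pentagon in $\cN_l(\cG)$ is not an extra axiom but is forced by the very definition of the tensor product on the nucleus. The expected obstacle is accordingly only organizational — tracking, throughout the functoriality and naturality checks, which structure map ($a$, $b$, or $a|b$) occupies which slot — rather than any genuine coherence difficulty.
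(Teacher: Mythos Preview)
Your proof is correct and follows the paper's approach: the core observation is identical --- the pentagon for $\phi$ in $\cN_l(\cG)$, once pushed down along the forgetful functor, \emph{is} the defining pentagon for $(a|b)_{X,Y}$ specialised to $X=C$, $Y=D$. Your explicit use of faithfulness of $U$ to dispatch the bifunctoriality and naturality checks is a tidy organising device that the paper leaves implicit (those checks having been absorbed into the discussion preceding the theorem), but it is the same argument.
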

\begin{proof}
All we need to verify is that the associativity constraint satisfies the pentagon axion, which says that the diagram

$$
\xygraph{ !{0;/r4.5pc/:;/u4.5pc/::}[]*+{A\otimes(B\otimes(C\otimes D))} (
  :[u(1.1)r(1.7)]*+{(A\otimes B)\otimes(C\otimes D)} ^{a_{B,C\otimes D}}
  :[d(1.1)r(1.7)]*+{((A\otimes B)\otimes C)\otimes D}="r" ^{(a|b)_{C,D}}
  ,
  :[r(.6)d(1.5)]*+!R(.3){A\otimes((B\otimes C)\otimes D)} ^{A\otimes b_{C,D}}
  :[r(2.2)]*+!L(.3){(A\otimes(B\otimes C))\otimes D} ^{a_{B\otimes C,D}}
  : "r" ^{a_{B,C}\otimes D}
)
}$$
is commutative for any $(A,a),(B,b),(C,c),(D,d)\in\cN_l(\cG)$. This obviously follows from the definition of $a|b$.
\end{proof}

Semigroupal structures on the category $\cG$ correspond to functors $\cG\to \cN_l(\cG)$ which split the forgetful
functor $\cN_l(\cG)\to \cG$.
\begin{theo}
If $\cG$ is a semigroupal category with the associativity constraint $\phi$, the assignment $A\mapsto (A,\phi_{A,-,-})$
defines a semigroupal functor $\cG\to \cN_l(\cG)$ whose composition $\cG\to \cN_l(\cG)\to \cG$ with the forgetful
functor is the identity.

Conversely, a functor $\cG\to \cN_l(\cG)$, whose composition $\cG\to \cN_l(\cG)\to \cG$ with the forgetful functor is
the identity, defines an associativity constraint on $\cG$ thus turning it into a semigroupal category.
\end{theo}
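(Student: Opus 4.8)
For the first assertion, write $F$ for the assignment $A\mapsto(A,\phi_{A,-,-})$ on objects, $f\mapsto f$ on morphisms, let $U$ denote the forgetful functor, and recall that $\cN_l(\cG)$ is semigroupal by Theorem~\ref{semigr}. The plan is to check the three ingredients of a semigroupal functor in turn. Since $\phi$ is an associativity constraint, for each fixed $A$ the morphisms $\phi_{A,X,Y}$ form a family of isomorphisms natural in $X$ and $Y$, so $(A,\phi_{A,-,-})$ is an object of $\cN_l(\cG)$; and naturality of $\phi$ in its first argument is exactly the commutativity of the square defining a morphism of $\cN_l(\cG)$ for $f\colon(A,\phi_{A,-,-})\to(B,\phi_{B,-,-})$. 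Hence $F$ is a functor with $U\circ F=\mathrm{id}_{\cG}$ evident. Next I would show $F$ is \emph{strict} magmoidal: evaluating the pentagon axiom for $\phi$ at $A,B,X,Y$ and comparing it with the pentagon defining $\phi_{A,-,-}\,|\,\phi_{B,-,-}$, one cancels the isomorphism $\phi_{A,B,X\otimes Y}$ and gets $(\phi_{A,-,-}\,|\,\phi_{B,-,-})_{X,Y}=\phi_{A\otimes B,X,Y}$, i.e.\ $(A,\phi_{A,-,-})\otimes(B,\phi_{B,-,-})=(A\otimes B,\phi_{A\otimes B,-,-})$; strictness on morphisms then follows from diagram~(\ref{tpm}), which says $U$ carries $\otimes$ of morphisms to $\otimes$ of morphisms. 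Taking the comparison isomorphism of $F$ to be the identity, strictness reduces the coherence axiom of a semigroupal functor to the single equality that the associativity constraint of $\cN_l(\cG)$ at $(FX,FY,FZ)$ equals $F(\phi_{X,Y,Z})$; by the definition $a_{B,C}$ of that constraint (their common source and target being identified using the strictness just established) both sides are the morphism $\phi_{X,Y,Z}$ of $\cG$ regarded as a morphism $(X\otimes(Y\otimes Z),\phi_{X\otimes(Y\otimes Z),-,-})\to((X\otimes Y)\otimes Z,\phi_{(X\otimes Y)\otimes Z,-,-})$ of $\cN_l(\cG)$, hence they coincide because $U$ is faithful.

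For the converse I would write a section $G$ of $U$ as $A\mapsto(A,a^{A})$, necessarily $G(f)=f$ on morphisms, and set $\phi_{A,X,Y}:=a^{A}_{X,Y}$. That these are isomorphisms natural in $X,Y$ is part of the notion of an object of $\cN_l(\cG)$, and that the family is natural in $A$ is the statement that each $G(f)=f$ is a morphism of $\cN_l(\cG)$; so $\phi$ is at least a natural isomorphism from $-\otimes(-\otimes-)$ to $(-\otimes-)\otimes-$. For the pentagon one uses that the section respects the tensor product: regarding $G$ as a magmoidal functor with $U\circ G$ the identity, the comparison morphism $G(A)\otimes G(B)\to G(A\otimes B)$ has underlying morphism the identity of $A\otimes B$, and for that identity to be a morphism of $\cN_l(\cG)$ from $(A\otimes B,a^{A}\,|\,a^{B})$ to $(A\otimes B,a^{A\otimes B})$ one needs $a^{A\otimes B}=a^{A}\,|\,a^{B}$; inserting this into the pentagon that defines $a^{A}\,|\,a^{B}$ returns exactly the pentagon axiom for $\phi$. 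Thus $(\cG,\otimes,\phi)$ is semigroupal, and the two constructions are visibly mutually inverse, which gives the stated correspondence.

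The naturality verifications and the bookkeeping of which family of isomorphisms labels each object (so that the coherence square in the first part is well typed before one recognizes it as an identity) are routine. The one substantive point is the pentagon in the converse: mere functoriality of a section of $U$ yields only a natural isomorphism with no pentagon constraint — on $\Vect$, say, rescaling the associator by a nonzero scalar defines such a section — so the argument must consume the compatibility of the section with $\otimes$ in order to pin $a^{A\otimes B}$ down as $a^{A}\,|\,a^{B}$. That is the step I expect to be the crux.
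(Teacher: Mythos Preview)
Your argument follows the same route as the paper's: both hinge on the equivalence between the pentagon axiom for $\phi$ and the identity $\phi_{A,-,-}\,|\,\phi_{B,-,-}=\phi_{A\otimes B,-,-}$, which makes the functor $A\mapsto(A,\phi_{A,-,-})$ strict semigroupal; the paper's proof is just a terse version of yours.

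Your final paragraph, however, goes beyond the paper and is worth highlighting. The paper's statement of the converse literally asks only for a \emph{functor} $\cG\to\cN_l(\cG)$ splitting the forgetful functor, and its proof simply asserts that the resulting $\phi_{A,X,Y}=a^A_{X,Y}$ ``is an associativity constraint''. You correctly observe that this is not enough: a bare functorial section yields a natural isomorphism $\phi$ with no pentagon constraint (your $\Vect$ example with the associator rescaled by $\lambda\neq 1$ is a genuine counterexample, since the pentagon then demands $\lambda^2=\lambda^3$). One must assume the section is (strict) magmoidal so that $a^{A\otimes B}=a^A\,|\,a^B$, whereupon the defining pentagon for $a^A\,|\,a^B$ becomes the pentagon axiom for $\phi$. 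So your proof is not merely a fleshing-out of the paper's---it repairs an imprecision in the statement of the converse.
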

\begin{proof}
The pentagon axiom for $\phi$ implies that $\phi_{A,-,-}|\phi_{B,-,-} = \phi_{A\otimes B,-,-}$, so that $A\mapsto
(A,\phi_{A,-,-})$ is a strict semigroupal functor.

A splitting $\cG\to \cN_l(\cG)$ of the forgetful functor $\cN_l(\cG)\to \cG$ must have the form: $A\mapsto (A,a)$. It is straightforward to see that $\phi_{A,X,Y} = a_{X,Y}$ is an associativity constraint on $\cG$.
\end{proof}

\begin{rem}
\end{rem}
Diagrams of this section (as well as their faces, edges and vertices) can be parameterized by certain partially
parenthesised  words of objects (morphisms) of $\cG$. For example, the tensor product of two objects $X\otimes Y$
corresponds to the word $XY$, tensor products $A\otimes(X\otimes Y)$ and $(A\otimes X)\otimes Y$ correspond to the
words $A(XY)$ and $(AX)Y$ respectfully. The word $AXY$ corresponds to the associativity morphism $a_{X,Y}$ and the word
$ABXY$ to the pentagon diagram which was used to define the tensor product in $\cN_l(\cG)$. Five edges of this pentagon
correspond to five ways of placing a pair of brackets $(\ )$ in the word $ABXY$: $$A(BXY),\ (AB)XY,\ A(BX)Y,\ AB(XY),\
(ABX)Y$$ which represent $$A\otimes b_{X,Y},\ (a|b)_{X,Y},\ a_{B\otimes X,Y},\ a_{B,X\otimes Y},\ a_{B,X}\otimes
Y$$ respectively. The diagram for tensor product of morphisms (\ref{tpm}) corresponds to the word $fgXY$. Finally, Stasheff's
polytope corresponds to the word $ABCXY$. Again its faces are parameterized by partial parenthesisings of the word $ABCXY$
with a single pair of brackets $(\ )$. Six pentagon faces of the Stasheff polytope correspond to the words: $$A(BCXY),\
(AB)CXY,\ A(BC)XY,\ AB(CX)Y,\ ABC(XY),\ (ABCX)Y,$$ while three square faces are labeled by: $$(ABC)XY,\ A(BCX)Y,\
AB(CXY).$$ The fact that the associativity morphism $a_{B,C}$ is a morphism in the nucleus $\cN_l(\cG)$ follows from
the commutativity of the face $(ABC)XY$. Finally, the pentagon axiom for its associativity is encoded by $ABCD$.

Using the word presentation for diagram we can sketch the proof of the analog of the theorem \ref{semigr} for $\cN_m(\cG)$.
Define $\cN_m(\cG)$ as the category of pairs $(A,a)$, where $A\in\cG$ and $a$ is a natural collection of isomorphisms
$$a_{X,Y}:X\otimes(A\otimes Y)\to(X\otimes A)\otimes Y$$ which we label by the word $XAY$. As before morphisms in
$\cN_m(\cG)$ are morphisms in $\cG$ compatible with natural isomorphisms. We can define the tensor product of pairs
$(A,a)\otimes (B,a) = (A\otimes B,a|b)$ using the pentagon with the label $XABY$. The diagram of the shape $XfgY$
implies that the tensor product of morphisms $f\otimes g$ is a morphism of tensor products. It follows from
commutativity properties of the shape $XABCY$ that the formula $\phi_{(A,a),(B,b),(C,c)} = b_{A,C}$ defines a natural collection of isomorphisms in $\cN_m(\cG)$:
$$\phi_{(A,a),(B,b),(C,c)}:(A,a)\otimes((B,b)\otimes(C,c))\to ((A,a)\otimes(B,b))\otimes(C,c).$$ The word $ABCD$
gives the pentagon axiom for this constraint.

Similarly, define $\cN_r(\cG)$ as the category of pairs $(A,a)$, with $$a_{X,Y}:X\otimes(Y\otimes A)\to(X\otimes
Y)\otimes A$$ which we label by the word $XYA$. Again, morphisms in $\cN_r(\cG)$ are morphisms in $\cG$ compatible with
natural isomorphisms. The tensor product of pairs $(A,a)\otimes (B,a) = (A\otimes B,a|b)$ is defined by the pentagon
with the label $XYAB$. The diagram of the shape $XYfg$ implies that the tensor product of morphisms $f\otimes g$ is a
morphism of tensor products.The shape $XYABC$ implies that the formula $\phi_{(A,a),(B,b),(C,c)} = c_{A,B}$ defines a natural collection of isomorphisms in $\cN_r(\cG)$:
$$\phi_{(A,a),(B,b),(C,c)}:(A,a)\otimes((B,b)\otimes(C,c))\to ((A,a)\otimes(B,b))\otimes(C,c).$$ Finally, the word
$ABCD$ gives the pentagon axiom for this constraint.

\subsection{Units}\label{unit}

In this section
we slightly modify the construction to incorporate units. First we need to modify the setup.

Let now $\cG$ be a magmoidal category with {\em unit} (an object $1\in \cG$ with natural isomorphisms $l_X:1\otimes X\to X,\ r_X:X\otimes 1\to X$ such that $l_1=r_1$). Define the left {\em unital nucleus} $\cN^1_l(\cG)$ as the category of pairs $(A,a)$ where as before $a_{X,Y}:A\otimes(X\otimes Y)\to (A\otimes X)\otimes Y$ is natural collection of isomorphism but now satisfying {\em normalisation} conditions:
the diagrams

\xymatrix{A\otimes(1\otimes X) \ar[rr]^{a_{1,X}} \ar[rd]_{A\otimes l_X} & & (A\otimes 1)\otimes X \ar[dl]^{r_A\otimes X}  \\ & A\otimes X  }
\xymatrix{A\otimes(X\otimes 1) \ar[rr]^{a_{X,1}} \ar[rd]_{A\otimes r_X} & & (A\otimes X)\otimes 1 \ar[dl]^{r_{A\otimes X}} \\ & A\otimes X }
commute for any $X\in\cG$.

Define a natural collection of isomorphisms $i_{X,Y}:1\otimes(X\otimes Y)\to (1\otimes X)\otimes Y$ as the composition \xymatrix{1\otimes(X\otimes Y) \ar[r]^{l_{X\otimes Y}} & X\otimes Y \ar[r]^{(l_X\otimes Y)^{-1}} & (1\otimes X)\otimes Y .}

\begin{lem}\label{uobj}
The pair $(1,i)$ is an object of $\cN^1_l(\cG)$.
\end{lem}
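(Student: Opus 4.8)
The plan is to verify two things: first, that the natural collection $i_{X,Y}$ satisfies the two normalisation conditions defining objects of $\cN^1_l(\cG)$ (with $A$ replaced by $1$), and second, that $i$ is genuinely natural in $X,Y$ — though naturality is immediate since $i_{X,Y}$ is built as a composite of two natural isomorphisms $l_{X\otimes Y}$ and $(l_X\otimes Y)^{-1}$.

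For the first normalisation condition, I would plug $X=1$ into the definition: $i_{1,Y}$ is the composite $1\otimes(1\otimes Y)\xrightarrow{l_{1\otimes Y}} 1\otimes Y\xrightarrow{(l_1\otimes Y)^{-1}}(1\otimes 1)\otimes Y$, and I need to check the triangle with vertices $1\otimes(1\otimes Y)$, $(1\otimes 1)\otimes Y$, $1\otimes Y$, legs $i_{1,Y}$, $1\otimes l_Y$, and $r_1\otimes Y$. Equivalently I must show $i_{1,Y} = (r_1\otimes Y)^{-1}\circ(1\otimes l_Y)$, i.e. $(l_1\otimes Y)^{-1}\circ l_{1\otimes Y} = (r_1\otimes Y)^{-1}\circ(1\otimes l_Y)$. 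Using $l_1=r_1$ this reduces to $l_{1\otimes Y}=(1\otimes l_Y)$ after cancelling $(l_1\otimes Y)^{-1}$ — wait, more carefully, it reduces to $l_{1\otimes Y} = (l_1\otimes Y)\circ(r_1\otimes Y)^{-1}\circ(1\otimes l_Y)$; since $l_1=r_1$ the middle cancels and we need $l_{1\otimes Y} = 1\otimes l_Y$, which is precisely naturality of $l$ applied to the morphism $l_Y:1\otimes Y\to Y$ (recall $l_X:1\otimes X\to X$ natural means $l_Y\circ(1\otimes l_Y) = l_Y\circ l_{1\otimes Y}$, hence equality after cancelling the iso $l_Y$). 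For the second condition I would plug $Y=1$: here $i_{X,1}$ is $1\otimes(X\otimes 1)\xrightarrow{l_{X\otimes 1}}X\otimes 1\xrightarrow{(l_X\otimes 1)^{-1}}(1\otimes X)\otimes 1$, and I must check the triangle with legs $i_{X,1}$, $1\otimes r_X$, and $r_{1\otimes X}$. This unwinds to $r_{1\otimes X}\circ i_{X,1} = 1\otimes r_X$, i.e. $r_{1\otimes X}\circ(l_X\otimes 1)^{-1}\circ l_{X\otimes 1} = 1\otimes r_X$; naturality of $r$ with respect to $l_X:1\otimes X\to X$ gives $r_X\circ(l_X\otimes 1) = l_X\circ r_{1\otimes X}$, and naturality of $l$ with respect to $r_X:X\otimes 1\to X$ gives $l_X\circ l_{X\otimes 1} = r_X\circ l_{X\otimes 1}$ — hmm, rather $r_X\circ l_{X\otimes 1} = l_X \circ (1\otimes r_X)$; combining these two naturality squares, plus nothing else, yields the claim.

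The main obstacle — really the only subtlety — is keeping the two structural isomorphisms $l$ and $r$ straight and invoking their naturality against the correct morphisms (each of $l_X,r_X$ is itself a morphism one can feed into the other's naturality square). No pentagon or triangle coherence beyond the stated axiom $l_1=r_1$ is needed; indeed the point of the normalisation conditions is exactly that the unit $(1,i)$ should trivially satisfy them. I would present both verifications as small commuting triangles pasted from naturality squares for $l$ and $r$, and note that the hypothesis $l_1=r_1$ is used exactly once, in the first condition, to make the two evident ways of writing the canonical iso $1\otimes(1\otimes Y)\to(1\otimes 1)\otimes Y$ agree.
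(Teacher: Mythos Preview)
Your proposal is correct and follows essentially the same route as the paper: both normalisation conditions are verified by unwinding the definition of $i$ and invoking the naturality of $l$ and $r$ against one another, using $l_1=r_1$ for the first condition and the two naturality squares $r_X\circ(l_X\otimes 1)=l_X\circ r_{1\otimes X}$ and $r_X\circ l_{X\otimes 1}=l_X\circ(1\otimes r_X)$ for the second. The paper presents these as pasted commutative diagrams while you write them out equationally, but the content is identical.
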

\begin{proof}
The normalisation conditions for the collection $i$ follow from the commutative diagrams:
\newline
\xymatrix{ 1\otimes(1\otimes Y) \ar[rr]^{i_{1,Y}} \ar@/^2pt/[rd]^{l_{1\otimes Y}} \ar@/_20pt/[rd]_{1\otimes l_Y} & & (1\otimes 1)\otimes Y \ar@/^20pt/[ld]^{r_1\otimes Y} \ar@/_2pt/[ld]_{l_1\otimes Y} \\ & 1\otimes Y }

\xymatrix{1\otimes(X\otimes 1) \ar[rr]^{i_{,1}} \ar[rd]_{1\otimes r_X} \ar[rddd]_{l_{X\otimes 1}} & & (1\otimes X)\otimes 1 \ar[ld]^{r_{1\otimes X}} \ar[lddd]^{l_X\otimes 1} \\ & 1\otimes X \ar[d]^{l_X} \\ & X \\ & X\otimes 1 \ar[u]_>>>{r_X} }
Here the equation $l_{1\otimes Y} = 1\otimes l_Y$ (used in the first diagram) follows from the naturality of $l$: $\quad$
\xymatrix{1\otimes (1\otimes Y) \ar[r]^{l_{1\otimes Y}} \ar[d]_{1\otimes l_Y} & 1\otimes Y \ar[d]^{l_Y} \\ 1\otimes Y \ar[r]^{l_Y} & Y}

In the second diagram we use the naturality of $l$ with respect to $r_X$ and the naturality of $r$ with respect to $l_X$.
\end{proof}

\begin{theo}
The category $\cN^1_l(\cG)$ is monoidal with the unit object $(1,i)$ and unit natural isomorphisms
\begin{equation}\label{ui}
l_{(A,a)}:(1,i)\otimes(A,a)\to (A,a),\quad r_{(A,a)}:(A,a)\otimes(1,i)\to (A,a)
\end{equation}
defined by $l_{(A,a)} = l_A,\ r_{(A,a)} = r_A$.
\end{theo}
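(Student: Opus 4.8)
The plan is to verify the three monoidal-category axioms that remain after Theorem \ref{semigr}: that $l_{(A,a)}$ and $r_{(A,a)}$ are morphisms \emph{in} $\cN^1_l(\cG)$, that they are natural in $(A,a)$, and that they satisfy the triangle axiom relating them to the associativity constraint $\phi$. Since $\phi_{(A,a),(B,b),(C,c)}=a_{B,C}$ and the underlying objects, tensor product, and constraints are all inherited from $\cG$ along the forgetful functor $\cN^1_l(\cG)\to\cG$, the naturality in $(A,a)$ and the triangle identity will be \emph{automatic}: they hold in $\cG$ by hypothesis (the morphisms of $\cN^1_l(\cG)$ being a subclass of those of $\cG$), so the only genuine content is checking that $l_A\colon 1\otimes A\to A$ and $r_A\colon A\otimes 1\to A$ respect the nuclear structures — i.e. that they are arrows $(1,i)\otimes(A,a)\to(A,a)$ and $(A,a)\otimes(1,i)\to(A,a)$ in $\cN^1_l(\cG)$.

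First I would unwind what those two compatibility squares say. The tensor product $(1,i)\otimes(A,a)$ carries the structure $(1\otimes A, i|a)$, and $(A,a)\otimes(1,i)$ carries $(A\otimes 1, a|i)$. So I must show the two diagrams
\[
\xymatrix{
(1\otimes A)\otimes(X\otimes Y) \ar[r]^{(i|a)_{X,Y}} \ar[d]_{l_A\otimes(X\otimes Y)} & ((1\otimes A)\otimes X)\otimes Y \ar[d]^{(l_A\otimes X)\otimes Y} \\
A\otimes(X\otimes Y) \ar[r]^{a_{X,Y}} & (A\otimes X)\otimes Y
}
\]
and the analogous one for $r_A$ with $(a|i)_{X,Y}$ on top, commute for all $X,Y$. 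The strategy is to expand $(i|a)_{X,Y}$ (resp. $(a|i)_{X,Y}$) via the defining pentagon, substitute the explicit formula $i_{U,V}=(l_U\otimes V)^{-1}\circ l_{U\otimes V}$, and reduce the resulting boundary to a commuting region built out of: (i) the naturality of $l$ (resp. $r$) applied to the morphisms $a_{X,Y}$, $A\otimes X$, etc.; (ii) the normalisation conditions imposed on $a$; and (iii) the normalisation conditions on $i$ proved in Lemma \ref{uobj}. Concretely, for the $r_A$ case one uses the second normalisation triangle for $a$ (the one with $a_{X,1}$) together with $i_{X,Y}$'s formula and naturality of $r$; for the $l_A$ case one uses the first normalisation triangle for $a$ (with $a_{1,X}$) plus naturality of $l$ and $r$.

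The main obstacle I anticipate is the bookkeeping in the $l_A$-compatibility square: expanding $(i|a)_{X,Y}$ introduces the object $1\otimes(A\otimes X)$ and the arrow $1\otimes a_{X,Y}$, and one must thread the naturality of $l$ through $a_{X,Y}$ while simultaneously invoking the $a_{1,X}$-normalisation and the $i$-normalisation — several pentagon/triangle cells have to be pasted together in the right order. Once that diagram is shown to commute, the $r_A$ case is strictly easier (only the $a_{X,1}$-normalisation and naturality of $r$ are needed), and, as noted, naturality in $(A,a)$ and the triangle axiom are inherited verbatim from the monoidal structure already present on $\cG$; there is genuinely nothing further to check for them because every arrow involved is a $\cG$-arrow and the identities are hypotheses on $\cG$. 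I would close by remarking that the word-diagram bookkeeping of the preceding Remark again organizes the pasting: the $l_A$ square is the face labelled by $1AXY$ and the $r_A$ square by $AX Y1$ (equivalently $A1XY$ after the appropriate relabelling), so its commutativity is one facet of the coherence polytope cut down by the normalisation conditions.
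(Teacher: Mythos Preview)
Your approach to the central step---showing that $l_A$ and $r_A$ are morphisms in $\cN^1_l(\cG)$ by expanding $(i|a)_{X,Y}$ and $(a|i)_{X,Y}$ via the defining pentagon and pasting together naturality squares and normalisation triangles---is correct and is exactly what the paper does. However, there are two genuine gaps.

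First, you omit an entire step: one must check that $\cN^1_l(\cG)$ is closed under the tensor product of $\cN_l(\cG)$, i.e.\ that if $(A,a)$ and $(B,b)$ satisfy the normalisation conditions then so does $(A\otimes B,\,a|b)$. Theorem \ref{semigr} only tells you $\cN_l(\cG)$ is semigroupal; it says nothing about the full subcategory cut out by normalisation. The paper proves this with two further pentagon-shaped diagrams (words $AB1Y$ and $ABX1$) before even discussing the unit morphisms. Without this, you do not yet know that $\cN^1_l(\cG)$ has a tensor product at all.

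Second, your justification of the triangle axiom is wrong. You write that it is ``inherited verbatim from the monoidal structure already present on $\cG$,'' but $\cG$ is only a \emph{magmoidal} category with unit: it has no associativity constraint, hence no triangle axiom to inherit. The triangle identity in $\cN^1_l(\cG)$ reads $(r_A\otimes B)\circ a_{1,B}=A\otimes l_B$, which is precisely the first normalisation condition imposed on $a$ in the definition of $\cN^1_l(\cG)$. So the axiom does hold, but for the opposite reason: it is built into the objects of the subcategory, not supplied by the ambient $\cG$. (Your claim about naturality of $l_{(A,a)},r_{(A,a)}$ is fine---that really does reduce to naturality of $l,r$ in $\cG$.)
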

\begin{proof}
First we need to check that the tensor product of pairs as it was defined for $\cN_l(\cG)$ preserves the normalisation conditions. This follow from the commutative diagrams:
\newline
$$
\xygraph{ !{0;/r6.5pc/:;/u6.5pc/::}[]*+{A(B(1Y))} (
  :[r(1.1)]*+{A(BY)}="ml"_{A(Bl_Y)}
  :[r(1.2)]*+{(AB)Y}="mr"_{a_{B,Y}}
  ,
  :[u(1.1)r(1.7)]*+{(AB)(1Y)} ^{a_{B,1Y}}
  (
  :"mr"_{(AB)l_Y}
  ,
  :[d(1.1)r(1.7)]*+{((AB)1)Y}="r" ^{(a|b)_{1,Y}}
  :"mr"^{r_{AB}Y}
  )
  ,
  :[r(.6)d(1.1)]*+!R(.3){A((B1)Y)} _{Ab_{1,Y}}
  (
  :"ml"_{A(r_bY)}
  ,
  :[r(2.2)]*+!L(.3){(A(B1))Y} ^{a_{B1,Y}}
  (
  : "r" _{a_{B,1}Y}
  ,
  :"mr"^{(Ar_B)Y}
  )
  )
)
}$$
$$
\xygraph{ !{0;/r6.5pc/:;/u6.5pc/::}[]*+{A(B(X1))} (
  :[r(1.1)]*+{A(BX)}="ml"^{A(Br_X)}
  :[r(1.2)]*+{(AB)X}="mr"^{a_{B,X}}
  ,
  :[u(1.1)r(1.7)]*+{(AB)(X1)} ^{a_{B,X1}}
  (
  :"mr"_{(ABr_X}
  ,
  :[d(1.1)r(1.7)]*+{((AB)X)1}="r" ^{(a|b)_{X,1}}
  :"mr"^{r_{(AB)X}}
  )
  ,
  :[r(.6)d(1.1)]*+!R(.3){A((BX)1)} _{Ab_{X,1}}
  (
  :"ml"_{Ar_{BX}}
  ,
  :[r(2.2)]*+!L(.3){(A(BX))1} ^{a_{BX,1}}
  (
  : "r" _{a_{B,X}1}
  ,
  :"ml"_{r_{A(BX)}}
  )
  )
)
}$$
Thus the subcategory $\cN^1_l(\cG)$ is closed under tensor product in $\cN_l(\cG)$ and is a semigroupal category. By lemma \ref{uobj},  the pair $(1,i)$ is an object of $\cN^1_l(\cG)$. We need to check that the unit isomorphisms (\ref{ui}) are morphisms in he category $\cN^1_l(\cG)$. It is equivalent to the commutativity of the squares:

\xymatrix{(1A)(XY) \ar[r]^{(i|a)_{X,Y}} \ar[d]_{l_AXY} & ((1A)X)Y \ar[d]^{(l_AX)Y} &  (A1)(XY) \ar[r]^{(a|i)_{X,Y}} \ar[d]_{r_AXY} & ((A1)X)Y \ar[d]^{(r_AX)Y} \\ A(XY) \ar[r]^{a_{X,Y}} & (AX)Y & A(XY) \ar[r]^{a_{X,Y}} & (AX)Y}

which follows from the commutative diagrams:
$$
\xygraph{ !{0;/r6.5pc/:;/u6.5pc/::}[]*+{1(A(XY))} (
  :[r(1.1)]*+{A(XY)}="ml"_{l_{A(XY)}}
  :[r(1.2)]*+{(AX)Y}="mr"_{a_{X,Y}}
  ,
  :[u(1.1)r(1.7)]*+{(1A)(XY)} ^{i_{A,XY}}
  (
  :"ml"^{l_A(XY)}
  ,
  :[d(1.1)r(1.7)]*+{((1A)X)Y}="r" ^{(i|a)_{X,Y}}
  :"mr"^{(l_AX)Y}
  )
  ,
  :[r(.6)d(1.1)]*+!R(.3){1((AX)Y)} _{1a_{X,Y}}
  (
  :"mr"^{l_{(AX)Y}}
  ,
  :[r(2.2)]*+!L(.3){(1(AX))Y} ^{i_{AX,Y}}
  (
  : "r" _{i_{A,X}Y}
  ,
  :"mr"^{l_{AX}Y}
  )
  )
)
}$$
$$
\xygraph{ !{0;/r6.5pc/:;/u6.5pc/::}[]*+{A(1(XY))} (
  :[r(1.1)]*+{A(XY)}="ml"_{Al_{XY}}
  :[r(1.2)]*+{(AX)Y}="mr"_{a_{X,Y}}
  ,
  :[u(1.1)r(1.7)]*+{(A1)(XY)} ^{a_{1,XY}}
  (
  :"ml"^{r_A(XY)}
  ,
  :[d(1.1)r(1.7)]*+{((A1)X)Y}="r" ^{(a|i)_{X,Y}}
  :"mr"^{(r_AX)Y}
  )
  ,
  :[r(.6)d(1.1)]*+!R(.3){A((1X)Y)} _{Ai_{X,Y}}
  (
  :"ml"_{A(l_XY)}
  ,
  :[r(2.2)]*+!L(.3){(A(1X))Y} ^{a_{1X,Y}}
  (
  : "r" _{a_{1,X}Y}
  ,
  :"mr"^{(Al_X)Y}
  )
  )
)
}$$

Finally, the coherence for the unit isomorphisms

\xymatrix{(A,a)\otimes((1,i)\otimes(B,b)) \ar[rd]_{(A,a)\otimes l_{(B,a)}} \ar[rr]^{\phi_{(A,a),(1,i),(B,b)}} & &
((A,a)\otimes(1,i))\otimes(B,b) \ar[ld]^{r_{(A,a)}\otimes(B,b)} \\ & (A,a)\otimes(B,b) }

boils down to \xymatrix{A\otimes(1\otimes B) \ar[rr]^{a_{1,B}} \ar[rd]_{l_b} & & (A\otimes 1)\otimes B \ar[ld]^{r_A} \\
& A\otimes B}

which is commutative by the definition of $\cN^1_l(\cG)$.
\end{proof}
\begin{rem}
\end{rem}
The constructions of the following sections can be made unital (monoidal instead of just semi-groupal). We will not be doing it for the sake of simplicity.

\subsection{Functoriality}\label{funcnuc}
Here we assign, to an invertible natural self-transformation (an automorphism) $c$ of the tensor product functor
$\otimes:\cG\times\cG\to\cG$, a strict semigroupal autoequivalence $\cN_l(c):\cN_l(\cG)\to\cN_l(\cG)$ of the nucleus. For
an object $(A,a)$ of $\cN_l(\cG)$ set $\cN_l(c)(A,a)$ to be $(A,a^c)$ where $a^c$ is defined by the diagram:

\xymatrix{A\otimes(X\otimes Y)\ar[d]^{a_{X,Y}}\ar[r]^{c_{A,X\otimes Y}} & A\otimes(X\otimes Y) \ar[r]^{A\otimes
c_{X,Y}} & A\otimes(X\otimes Y) \ar[d]^{a^c_{X,Y}}
\\
(A\otimes X)\otimes Y \ar[r]^{c_{A\otimes X,Y}} & (A\otimes X)\otimes Y \ar[r]^{c_{A,X}\otimes Y} &
(A\otimes X)\otimes Y }

\begin{prop}
The functor
$\cN_l(c)$ is strict semigroupal.
\end{prop}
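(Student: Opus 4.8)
The plan is to unpack what "strict semigroupal functor" requires here and verify each piece. Since $\cN_l(c)$ acts as the identity on underlying objects of $\cG$ (it sends $(A,a)$ to $(A,a^c)$, and presumably sends a morphism $f$ to itself), the candidate semigroupal structure isomorphism $\cN_l(c)((A,a)\otimes(B,b)) \to \cN_l(c)(A,a)\otimes \cN_l(c)(B,b)$ should be the identity morphism of $A\otimes B$. So "strict" here means precisely that this identity is actually a morphism in $\cN_l(\cG)$, i.e.\ that
$$(a|b)^c = a^c | b^c,$$
and additionally that $\cN_l(c)$ is well-defined: it sends objects of $\cN_l(\cG)$ to objects (i.e.\ each $a^c_{X,Y}$ is a natural family of isomorphisms — immediate, since it is a composite of isomorphisms natural in $X,Y$, using naturality of $c$ in both slots), sends morphisms to morphisms (if $f:(A,a)\to(B,b)$ then $f:(A,a^c)\to(B,b^c)$, which follows by pasting the defining diagram for $a^c$, the defining diagram for $b^c$, and two naturality squares for $c$), and is an equivalence (indeed an isomorphism of categories, with inverse $\cN_l(c^{-1})$, after checking $(a^c)^{c^{-1}} = a$, again a diagram chase using that $c$ and $c^{-1}$ cancel and that $c$ respects $\otimes$).

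\medskip\noindent
The first concrete step is to write down the naturality square for $c$ with respect to the associativity isomorphism $a_{X,Y}: A\otimes(X\otimes Y)\to (A\otimes X)\otimes Y$ in the ``$\cG\times\cG$'' sense — but here $a_{X,Y}$ is a morphism in $\cG$, so we use naturality of $c$ as a transformation of the bifunctor $\otimes$ to slide $c$ past various structure maps. Concretely I would record that $a^c_{X,Y} = (c_{A,X}\otimes Y)\circ c_{A\otimes X, Y}\circ a_{X,Y}\circ c_{A,X\otimes Y}^{-1}\circ(A\otimes c_{X,Y})^{-1}$, read off from the defining diagram.

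\medskip\noindent
The main step, and the main obstacle, is verifying $(a|b)^c = a^c|b^c$. Recall $(a|b)_{X,Y}$ is defined by the pentagon with vertices $A\otimes(B\otimes(X\otimes Y))$, $(A\otimes B)\otimes(X\otimes Y)$, $((A\otimes B)\otimes X)\otimes Y$, $(A\otimes(B\otimes X))\otimes Y$, $A\otimes((B\otimes X)\otimes Y)$. I would build a large diagram in which the outer pentagon is the one defining $(a^c|b^c)$, each of whose five edges is expanded into its definition in terms of components of $c$, $a$, $b$; the interior is then tiled by: (i) the pentagon defining $a|b$ itself; (ii) the three squares defining $a^c$, $b^c$ and the relevant instances on the tensor-product object; and (iii) a collection of naturality squares for $c$ (past $a_{B,X}$, past $b_{X,Y}$, etc.) together with the basic hexagon/coherence-free identities satisfied by $c$ as a transformation of $\otimes$ (e.g.\ $c_{A,B\otimes C}$ versus $c_{A\otimes B, C}$ composed appropriately — these are automatic since $c$ is a transformation of the \emph{bifunctor}, no associativity is invoked). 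The bookkeeping is exactly the kind the author's Remark anticipates: the whole figure is indexed by the word $ABXY$ decorated with the transformation $c$, i.e.\ a ``$c$-twisted'' version of the pentagon (\ref{tpm})-style diagram. The obstacle is purely combinatorial — making sure every interior cell genuinely commutes by either naturality of $c$, functoriality of $\otimes$, or the already-established definition of $a|b$ — and there is no coherence subtlety, since at no point do we need the pentagon axiom for $\cG$ (it has none) nor any compatibility of $c$ beyond being a natural transformation of $\otimes\colon\cG\times\cG\to\cG$. Finally I would note that the pentagon axiom for the associativity constraint of $\cN_l(\cG)$ applied to $\cN_l(c)$-images is automatic once $(a|b)^c=a^c|b^c$ is known and the constraint $\phi$ is $\cN_l(c)$-equivariant, which reduces to the statement that $\phi_{(A,a^c),(B,b^c),(C,c')} = (a_{B,C})^{?}$ equals $a^c_{B,C}$ — but this last identity is false in general unless one checks that $\phi$ as a morphism is preserved, so more precisely the remaining point is: the image under $\cN_l(c)$ of the associativity morphism $\phi_{(A,a),(B,b),(C,c')} = a_{B,C}$ is the associativity morphism $\phi_{\cN_l(c)(A,a),\dots} = a^c_{B,C}$? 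No — here ``strict semigroupal'' only demands the structure \emph{isomorphisms} be identities and compatible with $\phi$ in the sense of the hexagon for semigroupal functors, which with identity structure maps collapses exactly to $(a|b)^c = a^c|b^c$. So that single identity is the whole content, and everything else is the routine well-definedness checks listed above.
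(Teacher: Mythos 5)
Your proposal follows the paper's proof in its essentials: the entire content of the paper's argument is the single identity $(a|b)^c = a^c|b^c$, which it establishes by precisely the large diagram you describe --- the pentagon defining $a|b$ and the pentagon defining $a^c|b^c$ joined by the three layers of $c$-components coming from the definition of $(-)^c$, with the interior tiled by naturality squares for $c$ (the paper remarks that this figure is the shape of Stasheff's $A_\infty$-map diagram). The well-definedness checks you list (that $a^c$ is again an object of the nucleus, that morphisms are preserved, that $\cN_l(c^{-1})$ gives an inverse via $(a^c)^d=a^{cd}$) are treated as routine in the paper and are handled just after the proposition by the same naturality arguments you sketch.

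One caveat concerns your closing paragraph. You first correctly observe that $\cN_l(c)$, being the identity on morphisms, sends the associativity morphism $\phi_{(A,a),(B,b),(C,c')}=a_{B,C}$ to $a_{B,C}$ itself, whereas the associativity constraint on the images is $a^c_{B,C}$, and that these differ in general --- but you then talk yourself out of the difficulty by asserting that the semigroupal-functor hexagon with identity structure maps ``collapses exactly to $(a|b)^c=a^c|b^c$.'' It does not: with identity structure maps the hexagon collapses to $F(\phi_{X,Y,Z})=\phi_{F(X),F(Y),F(Z)}$, which here is exactly the identity $a_{B,C}=a^c_{B,C}$ that you rightly doubted. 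The equation $(a|b)^c=a^c|b^c$ is only the object-level equality needed for the identity structure maps to make sense. The paper's proof does not address this compatibility with $\phi$ either, so you are not missing anything the paper supplies; but your stated resolution of the point is incorrect as written, and the compatibility with the associativity constraint either requires a separate verification or a weaker reading of ``strict semigroupal.''
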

\begin{proof}

The following commutative diagram shows that $(a|b)^c = (a^c|b^c)$ for $(A,a),(B,b)$ from $\cN_l(\cG)$:

$$
\xygraph{ !{0;/r6.5pc/:;/u6.5pc/::}[]*+{A(B(XY))}="l" (
  :[u(.7)rr]*+{(AB)(XY)} ^{a_{B,XY}}
  ( :[d(.7)rr]*+{((AB)X)Y}="r" ^{(a|b)_{X,Y}}
    :[d(1.35)]*+{((AB)X)Y}="r1" ^{c_{(AB)X,Y}}
    :[d(1.35)]*+{((AB)X)Y}="r2"  ^{c_{AB,X}Y}
    :[d(1.35)]*+{((AB)X)Y}="r3" ^{(c_{A,B}X)Y}
    ,
    :[d(1.35)]*+{(AB)(XY)}="t1" _{c_{AB,XY}}
    ( :[l(.3)d(1.35)]*+!R(.5){(AB)(XY)}="tl2" _{c_{A,B}(XY)} |(.263){\hole}
      :[r(.3)d(1.35)]*+{(AB)(XY)}="t3" _{(AB)c_{X,Y}}
      : "r3" ^(.3){(a^c|b^c)_{X,Y}} |*++++{\hole}
      ,
      :[r(.3)d(1.35)]*+!L(.5){(AB)(CX)}="tr2" ^{(AB)c_{X,Y}} |(.263){\hole}
      ( : "t3" ^{c_{A,B}(XY)}
        ,
        : "r2" ^(.3){(a|b)^c_{X,Y}} |*++++{\hole}
      )
    )
  )
  ,
  :[r(.8)d]*+{A((BX)Y)}="lb" ^{Ab_{X,Y}}
  ( :[r(2.4)]*+{(A(BX))Y}="rb" ^(.3){a_{BX,Y}}
    ( : "r" ^{a_{B,X}Y}
      ,
      :[d(1.35)]*+{A((BX)Y)}="rb1" _{c_{A(BX),Y}}
      ( : "r1" ^(.7){a_{B,X}Y}
        ,
        :[d(1.35)]*+{(A(BX))Y}="rb2" ^{c_{A,BX}Y}
        :[d(1.35)]*+{(A(BX))Y}="rb3" _{(Ac_{B,X})Y}
        : "r3" _{a^c_{B,X}Y}
      )
    )
    ,
    :[d(1.35)]*+{A((BX)Y)}="lb1" ^{c_{A,(BX)Y}}
    :[d(1.35)]*+{A((BX)Y)}="lb2" _{Ac_{BX,Y}}
    ( :[d(1.35)]*+{A((BX)Y)}="lb3" ^{A(c_{B,X}Y)}
      : "rb3" ^{a^c_{BX,Y}}
      ,
      : "rb2" ^{a^c_{BX,Y}}
    )
  )
  ,
  :[d(1.35)]*+{A(B(XY))}="l1" _{c_{A,B(XY)}}
  ( : "lb1" ^(.3){Ab_{X,Y}}
    ,
    :[d(1.35)]*+{A(B(XY))}="l2" _{Ac_{B,XY}}
    ( : "tl2" ^(.7){a^c_{B,XY}} |*++++{\hole}
      ,
      :[d(1.35)]*+{A(B(XY))}="l3" _{A(Bc_{X,Y})}
      ( : "t3" ^(.7){a^c_{B,XY}} |*++++{\hole}
        ,
        : "lb3" _{Ab^c_{X,Y}}
      )
    )
  )
)
}$$ Thus $\cN_l(c)((A,a)\otimes (B,b))$ {\em coincides} with $\cN_l(c)(A,a)\otimes\cN_l(c)(B,b)$.
\end{proof}
\begin{rem}
\end{rem}
A diagram of that shaped appeared in \cite{st} in connection with $A_\infty$-maps.

By naturality, $c_{A,X\otimes Y}$ commutes with $A\otimes d_{X,Y}$ and $c_{A\otimes X,Y}$ commutes with
$d_{A,X}\otimes Y$ for any $c,d\in Aut(\otimes)$. Thus we have that $(a^c)^d = a^{cd}$ which means that the
composition $\cN_l(c)\cN_l(d)$ {\em coincides} with $\cN_l(cd)$. In other words, the assignment $c\mapsto\cN_l(c)$
defines a group homomorphism $Aut(\otimes)\to Aut^\otimes(\cN_l(\cG))$ into the group of monoidal
autoequivalences.

Now we establish the link between natural transformations of the identity functor on $\cG$ and monoidal natural
transformations of functors of the form $\cN_l(c)$. First we define an action of the group $Aut(id_\cG)$ of
automorphisms of the identity functor on the group $Aut(\otimes)$ of automorphisms of the tensor product functor.
For $c\in Aut(\otimes)$ and $f\in Aut(id_\cG)$, define $c^f\in Aut(\otimes)$ by $$c^f_{X,Y} = (f_X\otimes
f_Y)c_{X,Y}f_{X\otimes Y}^{-1}.$$ Note that by naturality of $c$, $c_{X,Y}$ commutes with $f_X\otimes f_Y$ and by
naturality of $f$, $c_{X,Y}$ commutes with $f_{X\otimes Y}$ so $c^f_{X,Y}$ coincides with $$(f_X\otimes
f_Y)f_{X\otimes Y}^{-1}c_{X,Y} = c_{X,Y}(f_X\otimes f_Y)f_{X\otimes Y}^{-1}.$$ This in particular implies that
$Aut(id_\cG)$ acts on the group $Aut(\otimes)$: $$c^{fg}_{X,Y} = c_{X,Y}(fg_X\otimes fg_Y)fg_{X\otimes Y}^{-1} =
c_{X,Y}(f_X\otimes f_Y)f_{X\otimes Y}^{-1}(g_X\otimes g_Y)g_{X\otimes Y}^{-1} = (c^f)^g_{X,Y}.$$

\begin{prop}
A natural transformation $f\in Aut(id_\cG)$ of the identity functor defines a natural transformation $\cN_l(c)\to\cN_l(c^f)$.
\end{prop}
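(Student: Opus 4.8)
The plan is to define the transformation by $\eta_{(A,a)} := f_A$, viewed as a morphism from $\cN_l(c)(A,a)=(A,a^c)$ to $\cN_l(c^f)(A,a)=(A,a^{c^f})$. Two things then have to be checked: (i) that $f_A$ really is a morphism $(A,a^c)\to(A,a^{c^f})$ in $\cN_l(\cG)$, i.e.\ that it intertwines the natural isomorphisms $a^c$ and $a^{c^f}$; and (ii) that the family $\{f_A\}_{(A,a)}$ is natural in $(A,a)$. Point (ii) is immediate: both $\cN_l(c)$ and $\cN_l(c^f)$ send a morphism $g\colon(A,a)\to(B,b)$ to the same underlying morphism $g$, so the naturality square of $\eta$ over $g$ collapses to $f_B\circ g=g\circ f_A$, which is exactly the naturality of $f$ as an automorphism of $id_\cG$ applied to $g\colon A\to B$; and since all four arrows are morphisms of $\cN_l(\cG)$, the identity holds there.

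So the content is (i). Unwinding the defining square of $a^c$ gives $a^c_{X,Y}=(c_{A,X}\otimes Y)\circ c_{A\otimes X,Y}\circ a_{X,Y}\circ c_{A,X\otimes Y}^{-1}\circ (A\otimes c_{X,Y})^{-1}$, and $a^{c^f}_{X,Y}$ is the same expression with every $c$ replaced by $c^f$; what we must prove is the commutativity of
$$((f_A\otimes X)\otimes Y)\circ a^c_{X,Y}\;=\;a^{c^f}_{X,Y}\circ (f_A\otimes(X\otimes Y))$$
for all $X,Y$. I would do this as a single diagram chase: substitute $c^f_{U,V}=(f_U\otimes f_V)\circ c_{U,V}\circ f_{U\otimes V}^{-1}$ into each of the five factors of $a^{c^f}_{X,Y}$ --- recording first, as the paper does just above the statement, that these three factors pairwise commute --- and then simplify the right-hand side above using only three ingredients, applied repeatedly: \emph{functoriality of $\otimes$}, to collapse products such as $(f_{A\otimes X}^{-1}\otimes Y)\circ(f_{A\otimes X}\otimes f_Y)=(A\otimes X)\otimes f_Y$ and, at the very end, to produce the leading factor $(f_A\otimes X)\otimes Y$; \emph{naturality of $c$}, to slide components of $c$ past morphisms of the form $g\otimes f_?$ and $f_?\otimes g$ (for instance $c_{A,X}$ commutes with $A\otimes f_X^{-1}$, and $c_{X,Y}$ commutes with $f_X\otimes f_Y$); and \emph{naturality of $f\in Aut(id_\cG)$}, with respect to $a_{X,Y}$ and to the various components of $c$. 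The decisive cancellation is the $f$-naturality square for $a_{X,Y}$, which yields $f_{(A\otimes X)\otimes Y}^{-1}\circ a_{X,Y}\circ f_{A\otimes(X\otimes Y)}=a_{X,Y}$; once it, together with the $\otimes$-functoriality cancellations $f_?^{-1}\circ f_?=id$, has been applied, every $f_{A\otimes X}$, $f_{(A\otimes X)\otimes Y}$, $f_{A\otimes(X\otimes Y)}$, $f_X$, $f_Y$ introduced by the substitution disappears, leaving precisely $((f_A\otimes X)\otimes Y)\circ a^c_{X,Y}$.

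I expect the only obstacle to be the bookkeeping of (i): there is no conceptual difficulty, but one has to be disciplined about which naturality square is invoked at each step and about the order of the (mutually commuting) factors of $c^f$. It may be cleanest to organise the chase as a commutative three-dimensional diagram --- a ``cylinder'' whose two ends are the defining squares of $a^c$ and $a^{c^f}$ and whose lateral faces are the functoriality and naturality squares just listed --- in the spirit of the polytopal diagrams used earlier in the paper. Finally, one may note that, each $f_A$ being invertible, $\eta$ is in fact a natural \emph{isomorphism}, with inverse the transformation attached to $f^{-1}$ via the identity $(c^f)^{f^{-1}}=c$ proved above.
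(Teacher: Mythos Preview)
Your proposal is correct and follows essentially the same route as the paper: define the component at $(A,a)$ to be $f_A$, and verify that $f_A$ is a morphism $(A,a^c)\to(A,a^{c^f})$ by organising the check as a three-dimensional ``cylinder'' whose front and back faces are the defining rectangles of $a^c$ and $a^{c^f}$ and whose lateral faces are exactly the naturality/functoriality squares you list. The paper simply draws that cube and observes that the desired bottom square follows from the commutativity of the remaining faces; your explicit bookkeeping (substituting $c^f=(f\otimes f)\,c\,f^{-1}$ and cancelling via naturality of $f$ with respect to $a_{X,Y}$) is the algebraic transcription of the same diagram chase.
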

\begin{proof}
We need to show that for
any $(A,a)\in\cN_l(\cG)$ the map $f_A:A\to A$ is a morphism $(A,a)\to (A,a^{c^f})$ in $\cN_l(\cG)$ or that the square

\xymatrix{ A\otimes(X\otimes Y) \ar[d]_{f_A\otimes(X\otimes Y)}\ar[r]^{a^c_{X,Y}} & (A\otimes X)\otimes Y
\ar[d]^{(f_A\otimes X)\otimes Y}
\\
A\otimes(X\otimes Y) \ar[r]^{a^{c^f}_{X,Y}} & (A\otimes X)\otimes Y }

is commutative. The commutativity of this square is equivalent to the commutativity of the bottom square of
the following diagram:

\xymatrix@C=-10pt{ & A\otimes(X\otimes Y) \ar'[d][dd]^{c^f_{A,X\otimes Y}}\ar[rr]^{a_{X,Y}} & & (A\otimes
X)\otimes Y \ar[dd]^{c^f_{A\otimes X,Y}}
\\
A\otimes(X\otimes Y) \ar[ur]^{f_{A\otimes(X\otimes Y)}}\ar[dd]_{c_{A,X\otimes Y}}\ar[rr]^<<<<{a_{X,Y}} & & (A\otimes
X)\otimes Y \ar[ur]^{f_{A\otimes X)\otimes Y}}\ar[dd]^<<<<{c_{A\otimes X,Y}}
\\
 & A\otimes (X\otimes Y) \ar[dd]_>>>>{A\otimes c^f_{X,Y}} & & (A\otimes X)\otimes Y \ar[dd]^{c^f_{A,X}\otimes Y}
\\
A\otimes(X\otimes Y) \ar[ur]^{f_A\otimes f_{X\otimes Y}}\ar[dd]_{A\otimes c_{X,Y}} & & (A\otimes X)\otimes Y
\ar[ur]^{f_{A,X}\otimes f_Y}\ar[dd]_<<<<{c_{A,X}\otimes Y}
\\
 & A\otimes(X\otimes Y) \ar'[r][rr]^<<<<{a^{c^f}_{X,Y}} & & (A\otimes X)\otimes Y
\\
A\otimes(X\otimes Y) \ar[rr]_{a^c_{X,Y}}\ar[ur]_{f_A\otimes(f_X\otimes f_Y)} & & (A\otimes X)\otimes Y
\ar[ur]_{(f_A\otimes f_X)\otimes f_Y} }
\end{proof}

It is clear that the natural transformation $\cN_l(f):\cN_l(c)\to\cN_l(c^f)$ is monoidal iff $f\in Aut(id_\cG)$ is
monoidal, i.e. iff $f_{X\otimes Y}=f_X\otimes f_Y$ for any $X,Y\in\cG$.

\subsection{Subcategories of nuclei}\label{subnucl}

Here we define semigroupal (monoidal) subcategories in nuclei anchored at quasi-monoidal functors. Let $\cG$ be a
magmoidal category, let $\caH$ be a monoidal (semi-groupal) category and let $F:\cG\to \caH$ be a functor equipped with
a natural collection of isomorphisms $F_{X,Y}:F(X\otimes Y)\to F(X)\otimes F(Y)$ (a {\em quasi-monoidal} functor). Define a full subcategory $\cN_l(F)$
of $\cN_l(\cG)$ consisting of pairs $(A,a)$ such that the following diagram commutes:
\begin{equation}\label{subcoh}
\xymatrix{F(A\otimes(X\otimes Y)) \ar[d]_{F(a_{X,Y})}\ar[r]^{F_{X,Y}} & F(A)\otimes F(X\otimes Y) \ar[rr]^{F(A)\otimes
F_{X,Y}} & & F(A)\otimes (F(X)\otimes F(Y)) \ar[d]_{\psi_{F(A),F(X),F(Y)}}
\\
F((A\otimes X)\otimes Y) \ar[r]^{F_{A\otimes X,Y}} & F(A\otimes X)\otimes F(X) \ar[rr]^{F_{X,A}\otimes F(Y)} & &
(F(A)\otimes F(X))\otimes F(Y) } 
\end{equation}
here $\psi$ is the associativity constraint of $\caH$.

\begin{prop}
The subcategory $\cN_l(F)$ is closed under tensor product in $\cN_l(\cG)$. The quasi-monoidal functor $F$ lifts to a
monoidal (semi-groupal) functor $\overline F:\cN_l(F)\to \caH$.
\end{prop}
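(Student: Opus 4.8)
The plan is to mimic, on the categorical level, the two classical facts that the nucleus is a subalgebra and that the structure map restricts to a homomorphism there. Concretely, I must check two things: (1) if $(A,a)$ and $(B,b)$ both satisfy the coherence condition \eqref{subcoh} for $F$, then so does $(A\otimes B, a|b)$; and (2) the family $F_{X,Y}$, suitably assembled, upgrades $F$ to a (semi-groupal, resp.\ monoidal) functor $\overline F:\cN_l(F)\to\caH$, i.e.\ $\overline F(A,a)=F(A)$ with monoidal structure morphisms $F_{A,B}$, and the hexagon/pentagon compatibility between $F_{-,-}$, the associator $\phi$ of $\cN_l(F)$ (which is $a_{B,C}$), and the associator $\psi$ of $\caH$ holds.

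For step (1) I would write down the big pasting diagram of shape ``$ABXY$'' in $\caH$, analogous to the pentagon that defined $a|b$, but with every vertex replaced by its image under $F$ and with the isomorphisms $F_{-,-}$ interpolating between $F$ applied to a tensor product and the tensor product of the $F$'s. The outer boundary of this diagram is precisely the condition \eqref{subcoh} for $(A\otimes B,a|b)$. The interior decomposes into: two hexagonal regions that are \eqref{subcoh} for $(A,a)$ and for $(B,b)$ respectively (tensored on one side by $F$ of an object, using naturality of $F_{-,-}$ to slide things through); naturality squares for $F_{-,-}$; and one instance of the pentagon axiom for $\psi$ in $\caH$. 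Since each piece commutes by hypothesis, so does the boundary; hence $(A\otimes B,a|b)\in\cN_l(F)$, and since $\cN_l(F)$ is full in $\cN_l(\cG)$, morphisms cause no trouble. This is the step I expect to be the main obstacle: getting the subdivision of the $ABXY$-prism right so that exactly one pentagon of $\psi$ appears and the two copies of \eqref{subcoh} fit, without sign or direction errors.

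For step (2), I would set $\overline F := F$ on objects and morphisms, with monoidal structure isomorphisms $\overline F_{(A,a),(B,b)} := F_{A,B}: F(A\otimes B)\to F(A)\otimes F(B)$. These are natural in $(A,a),(B,b)$ because they are natural in $A,B$ in $\cG$ and the morphisms of $\cN_l(\cG)$ are a fortiori morphisms of $\cG$. The single coherence axiom to check for a semigroupal functor is the compatibility of $F_{-,-}$ with the associators: the pentagon-shaped diagram relating $F$ of the associator $\phi_{(A,a),(B,b),(C,c)}=a_{B,C}$ of $\cN_l(F)$ to the associator $\psi$ of $\caH$. But this diagram is exactly \eqref{subcoh} evaluated at the objects $X=(B,b)$-underlying, $Y=(C,c)$-underlying — or more precisely, it is \eqref{subcoh} for the object $(A,a)$ applied with $X\rightsquigarrow B$, $Y\rightsquigarrow C$, which holds because $(A,a)\in\cN_l(F)$ by definition of the subcategory. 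So the semigroupal-functor axiom for $\overline F$ is built into membership in $\cN_l(F)$, and there is nothing further to verify.

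Finally, in the unital case one argues analogously: $\cN_l(F)$ should be taken inside $\cN^1_l(\cG)$, the object $(1,i)$ lies in it because the normalisation conditions for $i$ together with $F$ being quasi-monoidal force \eqref{subcoh} at $(1,i)$ (using $F_{1,X}$ and the unit isomorphisms of $\caH$, and the fact that a monoidal $\caH$ satisfies the triangle axiom), and $\overline F$ then preserves the unit up to the canonical isomorphism $F(1)\to \mathbf 1_\caH$ coming from the quasi-monoidal structure — I would only sketch this, as the paper's own Remark indicates the unital refinement is left to the reader. The essential content, and the place where care is needed, remains the $ABXY$-diagram chase of step (1).
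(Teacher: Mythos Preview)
Your proposal is correct and matches the paper's proof essentially line for line: the paper draws exactly the $ABXY$-shaped prism in $\caH$ you describe (it is the same diagram shape as in the functoriality section), and for step (2) simply observes that the coherence axiom for $\overline F$ is the defining diagram \eqref{subcoh}. One small imprecision: the prism uses the coherence hexagon \eqref{subcoh} for $(A,a)$ three times (once for each of $a_{B,X\otimes Y}$, $a_{B\otimes X,Y}$, $a_{B,X}$) and for $(B,b)$ once (for $b_{X,Y}$), not just ``two hexagonal regions'' total; the remaining faces are naturality squares for $F_{-,-}$ and the single pentagon for $\psi$, as you say.
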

\begin{proof}
We need to verify that the tensor product $(A\otimes B,a|b)$ of two pairs $(A,a),\ (B,b)$ from $\cN_l(F)$ is in
$\cN_l(F)$. It follows from the commutativity of the diagram:

$$
\xygraph{ !{0;/r6.5pc/:;/u6.5pc/::}[]*+{F(A(B(XY)))}="l" (
  :[u(.7)rr]*+{F((AB)(XY))} ^{F(a_{B,XY})}
  ( :[d(.7)rr]*+{F(((AB)X)Y)}="r" ^{F((a|b)_{X,Y})}
    :[d(1.35)]*+{F((AB)X)F(Y)}="r1" ^{F_{(AB)X,Y}}
    :[d(1.35)]*+{(F(AB)F(X))F(Y)}="r2"  ^{F_{AB,X}F(Y)}
    :[d(1.35)]*+{((F(A)F(B))F(X))F(Y)}="r3" ^{(F_{A,B}F(X))F(Y)}
    ,
    :[d(1.35)]*+{F(AB)F(XY)}="t1" _{F_{AB,XY}}
    ( :[l(.3)d(1.35)]*+!R(.5){(F(A)F(B))F(XY)}="tl2" _{F_{A,B}F(XY)} |(.263){\hole}
      :[r(.3)d(1.35)]*+{(F(A)F(B))(F(X)F(Y))}="t3" _{(F(A)F(B))F_{X,Y}}
      : "r3" ^(.3){\psi} |*++++{\hole}
      ,
      :[r(.3)d(1.35)]*+!L(.5){F(AB)(F(X)F(Y))}="tr2" ^{F(AB)F_{,Y}X} |(.263){\hole}
      ( : "t3" ^{F_{A,B}(F(C)F(X))}
        ,
        : "r2" ^(.3){\psi} |*++++{\hole}
      )
    )
  )
  ,
  :[r(.8)d]*+{F(A((BX)Y))}="lb" ^{F(Ab_{X,Y})}
  ( :[r(2.4)]*+{F((A(BX))Y)}="rb" ^(.3){F(a_{BX,Y})}
    ( : "r" ^{F(a_{B,X}Y)}
      ,
      :[d(1.35)]*+{F(A(BX))F(Y)}="rb1" _{F_{A(BX),Y}}
      ( : "r1" ^(.7){F(a_{B,X})F(Y)}
        ,
        :[d(1.35)]*+{(F(A)F(BX))F(Y)}="rb2" ^{F_{A,BX}F(Y)}
        :[d(1.35)]*+{(F(A)(F(B)F(X)))F(Y)}="rb3" _{(F(A)F_{B,X})F(Y)}
        : "r3" _{\psi F(X)}
      )
    )
    ,
    :[d(1.35)]*+{F(A)F((BX)Y)}="lb1" ^{F_{A,(BX)Y}}
    :[d(1.35)]*+{F(A)(F(BX)F(Y))}="lb2" _{F(A)F_{BX,Y}}
    ( :[d(1.35)]*+{F(A)((F(B)F(X))F(Y))}="lb3" ^{F(A)(F_{B,X}F(Y))}
      : "rb3" ^{\psi}
      ,
      : "rb2" ^{\psi}
    )
  )
  ,
  :[d(1.35)]*+{F(A)F(B(XY))}="l1" _{F_{A,B(XY)}}
  ( : "lb1" ^(.3){F(A)F(b_{X,Y})}
    ,
    :[d(1.35)]*+{F(A)(F(B)F(XY))}="l2" _{F(A)F_{B,XY}}
    ( : "tl2" ^(.7){\psi} |*++++{\hole}
      ,
      :[d(1.35)]*+{F(A)(F(B)(F(X)F(Y)))}="l3" _{F(A)(F(B)F_{X,Y})}
      ( : "t3" ^(.7){\psi} |*++++{\hole}
        ,
        : "lb3" _{F(A)\psi}
      )
    )
  )
)
}$$ Now define the functor $\overline F:\cN_l(F)\to \caH$ by $\overline F(A,a) = F(A)$. The coherence axiom for the
compatibility isomorphism $F_{X,Y}:F(X\otimes Y)\to F(X)\otimes F(Y)$ follows from the definition of $\cN_l(F)$.
\end{proof}

In the next proposition we explain how monoidal autoequivalences of nuclei (defined in section \ref{funcnuc}) permute the subcategories of nuclei. Let $c_{X,Y}:X\otimes Y\to X\otimes Y$ be a natural collection of isomorphisms defining a semi-groupal functor $\cN_l(c):\cN_l(\cG)\to \cN_l(\cG)$. Let $F:\cG\to\caH$ be a quasi-monoidal functor defining a semi-groupal subcategory $\cN_l(F)$. Define a new quasi-monoidal structure $F^c$ on $F$ by 

\xymatrix@C=+35pt{F^c_{X,Y}:F(X\otimes Y) \ar[r]^(.6){F(c_{X,Y})} & F(X\otimes Y) \ar[r]^(.5){F_{X,Y}} & F(X)\otimes F(Y) }
\begin{prop}
The semigroupal functor $\cN_l(c)$ defines an equivalence between the semi-groupal subcategories $\cN_l(F),\ \cN_l(F^c)$. 
\end{prop}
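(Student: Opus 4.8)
The plan is to show that the autoequivalence $\cN_l(c)$ of $\cN_l(\cG)$ restricts to a functor $\cN_l(F)\to\cN_l(F^c)$ and, by applying the same argument to $c^{-1}$, that this restriction is an equivalence. Since $\cN_l(c)$ is already known (by the propositions of section \ref{funcnuc}) to be a strict semigroupal autoequivalence of $\cN_l(\cG)$, and since $\cN_l(F)$ and $\cN_l(F^c)$ are full subcategories, everything reduces to a single object-level verification: for $(A,a)\in\cN_l(F)$ one must check that $\cN_l(c)(A,a)=(A,a^c)$ lies in $\cN_l(F^c)$, i.e.\ that the coherence diagram (\ref{subcoh}) holds with $a$ replaced by $a^c$ and each $F_{-,-}$ replaced by the corresponding $F^c_{-,-}$.

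First I would write out the target hexagon: it asserts the commutativity of the outer path built from $F(a^c_{X,Y})$, $F^c_{X,Y}$, $F(A)\otimes F^c_{X,Y}$, $\psi_{F(A),F(X),F(Y)}$, $F^c_{A\otimes X,Y}$, and $F^c_{X,A}\otimes F(Y)$. The strategy is to substitute the definitions $F^c_{X,Y}=F_{X,Y}\circ F(c_{X,Y})$ and $a^c_{X,Y}=(c_{A,X}\otimes Y)\circ c_{A\otimes X,Y}\circ a_{X,Y}\circ (A\otimes c_{X,Y})^{-1}\circ c_{A,X\otimes Y}^{-1}$ (read off from the defining diagrams in section \ref{funcnuc}), then apply the functor $F$ to the $c$-factors and push them across the diagram. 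The original diagram (\ref{subcoh}) for $(A,a)\in\cN_l(F)$ sits inside as the ``inner'' face; the remaining cells are each an instance of the naturality of $F_{-,-}$ with respect to one of the morphisms $c_{A,X\otimes Y}$, $c_{A\otimes X,Y}$, $c_{A,X}$, $c_{X,Y}$ (equivalently, of the naturality square for the isomorphism $F_{-,-}$ applied to a $c$-component), together with the naturality of $\psi$ in $\caH$ with respect to $F(c_{A,X})$, and functoriality of $F$ applied to the two naturality squares for $c$ (that $c_{A,X\otimes Y}$ commutes with $A\otimes c_{X,Y}$, and $c_{A\otimes X,Y}$ commutes with $c_{A,X}\otimes Y$) that are already recorded in section \ref{funcnuc}. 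Assembling these cells into a commuting polytope — in the same spirit as the large diagram in the proof that $(a|b)^c=(a^c|b^c)$ — yields the desired hexagon.

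Once the object-level claim is established, $\cN_l(c)$ sends $\cN_l(F)$ into $\cN_l(F^c)$; since $\cN_l(c)$ is fully faithful and $\cN_l(F),\cN_l(F^c)$ are full, the restriction is fully faithful. For essential surjectivity, note $(F^c)^{c^{-1}}=F$, so $\cN_l(c^{-1})=\cN_l(c)^{-1}$ maps $\cN_l(F^c)$ into $\cN_l(F)$; the two restrictions are mutually inverse, giving the equivalence. The semigroupal structure on the restriction is inherited verbatim from the strict semigroupal structure of $\cN_l(c)$, because $\cN_l(F)$ and $\cN_l(F^c)$ are closed under the tensor product of $\cN_l(\cG)$ by the previous proposition and $\cN_l(c)$ preserves that tensor product on the nose. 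I expect the main obstacle to be purely bookkeeping: correctly orienting every $c$-component and every $F_{-,-}$ naturality square so that the cells tile without sign/direction mismatches — exactly the kind of large pasting diagram, indexed by partially parenthesised words, that the remark in section \ref{nuclsemi} describes, so in principle it is mechanical, but it is the step where an error is easiest to make.
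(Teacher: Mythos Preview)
Your proposal is correct and follows essentially the same approach as the paper: the paper's proof consists of a single pasting diagram whose outer boundary is the coherence hexagon (\ref{subcoh}) for $(A,a^c)$ with $F^c$, whose inner cell is the same hexagon for $(A,a)$ with $F$, and whose remaining cells are the naturality squares for $F_{-,-}$ and the definitions of $a^c$ and $F^c$ --- exactly the tiling you describe. You even supply slightly more than the paper does, namely the explicit reason this restriction is an equivalence (via $\cN_l(c^{-1})$ and $(F^c)^{c^{-1}}=F$), which the paper leaves implicit.
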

\begin{proof}
We need to show that the coherence diagram (\ref{subcoh}) commutes if we replace $F_{X,Y}$ by $F^c_{X,Y}$ and $a_{X,Y}$ by $a^c_{X,Y}$. This follows from the commutative diagram:
$$\xygraph{ !{0;/r5.5pc/:;/u4.5pc/::}[]*+{F(A(XY))}
( :[rr]*+{F(A)F(XY)}="mu" ^{F^c_{X,Y}}
  ( :[rr]*+{F(A)(F(X)F(Y))}="ru" ^{F(A)F^c_{X,Y}}
    :[d(1.3)]*+{(F(A)F(X))F(Y)}="rd" ^{\psi_{F(A),F(X),F(Y)}}
  ,
    :[rd]*+{F(A)F(XY)}="ur" ^{F(A)F(c_{X,Y})}
    :"ru" _{F(A)F_{X,Y}}
  )
,
  :[rd]*+{F(A(XY))} ^{F(c_{A,XY})}
  ( :"mu" _{F_{A,XY}}
  ,
    :[rd]*+{F(A(XY))} _{F(Ac_{X,Y})}
    ( :"ur" _{F_{A,XY}}
    ,
      :[d(1.3)]*+{F((AX)Y)}="d" ^{F(a_{X,Y})}
      :[ur]*{F(AX)F(Y)}="dr" _{F_{AX,Y}}
      :"rd" _{F_{A,X}F(Y)}
    )
  )
,
  :[d(1.3)]*+{F((AX)Y)} _{F(a^c_{X,Y})} 
  ( :[rr]*+{F(AX)F(Y)}="md" _{F^c_{X,Y}} |(.65){\hole}
    ( :"rd" _{F^c_{A,X}F(Y)} |(.35){\hole}
    ,
      :"dr" ^(.7){F(c_{A,X})F(Y)} |(.35){\hole}
    )
  ,
    :[rd]*+{F((AX)Y)} _{F(c_{AX,Y})}
    ( :"md" _(.3){F_{AX,Y}} |(.65){\hole}
    ,
      :"d" _{F(c_{A,X}Y)}
    )
  )
)
}$$
\end{proof}

\subsection{Actions of nuclei}\label{actnucl}

Recall that a monoidal (semigroupal) category $\cC$ {\em acts} on a category $\cM$ if there is given a monoidal
(semigroupal) functor $\cC\to \End(\cM)$ into the category of endofunctors on $\cM$. In that case $\cM$ is called a (left) {\em module} category over $\cC$ or a (left) $\cC$-{\em category}.

Let $L:\cC\to \End(\cM)$, $L':\cC\to \End(\cM')$ be two actions. 
A functor $F:\cM\to \cM'$ between $\cC$-categories is a $\cC$-{\em functor} if it is equipped with a collection of isomorphisms $F_{A,X}:F(L(A)(X))\to L'(A)(F(X))$ natural in $A\in\cC, X\in\cM$ such that the diagram 
\begin{equation}\label{cohfun}
\xymatrix@C=+35pt{ F(L(AB)(X)) \ar[r]^{F_{AB,X}} \ar[d]^{F(L_{A,B}(X))} & L'(AB)(F(X)) \ar[r]^{L'_{A,B}(F(X))} & L'(A)(L'(B)(F(X))) \ar@{=}[d] \\ 
F(L(A)(L(B)(X))) \ar[r]^{F_{A,L(B)(X)}} & L'(A)(F(L(B)(X))) \ar[r]^{L'(A)(F_{B,X})} & L'(A)(L'(B)(F(X))) }
\end{equation}
commutes. 

For a magmoidal category $\cG$ define a functor $L:\cN_l(\cG)\to \End(\cG)$ by $L(A,a)(X) = A\otimes X$. Define a monoidal structure $L_{(A,a),(B,b)}:L((A,a)\otimes(B,b))\to L(A,a)\circ L(B,b)$ (a natural collection of isomorphisms of functors) by $$\xymatrix{ L((A,a)\otimes(B,b))(X) \ar[rr]^{L_{(A,a),(B,b)}(X)} \ar@{=}[d] & & L(A,a)(L(B,b)(X)) \ar@{=}[d] \\ (A\otimes B)\otimes X & & \ar[ll]_{a_{B,X}} A\otimes(B\otimes X) }$$
\begin{prop}
The functor $L:\cN_l(\cG)\to \End(\cG)$ is semigroupal.
\end{prop}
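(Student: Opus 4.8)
The plan is to verify that the functor $L:\cN_l(\cG)\to\End(\cG)$, together with the natural isomorphisms $L_{(A,a),(B,b)}$ given by the inverse of the associator $a_{B,X}$, satisfies the coherence axiom for a semigroupal functor. Concretely, for objects $(A,a),(B,b),(C,c)$ of $\cN_l(\cG)$ we must check that the pentagon-type diagram relating $L_{(A,a)\otimes(B,b),(C,c)}$, $L_{(A,a),(B,b)\otimes(C,c)}$, $L_{(A,a),(B,b)}$, $L_{(B,b),(C,c)}$ and the image under $L$ of the associativity constraint $\phi_{(A,a),(B,b),(C,c)}=a_{B,C}$ commutes, when all these natural transformations of endofunctors are evaluated at an arbitrary object $X\in\cG$. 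Since every edge of that diagram is (the identity functor composed with) an instance of some component of $a$, $b$, $(a|b)$, or $a_{B,C}$ evaluated at $X$ in the last slot, the whole coherence square reduces to a commuting diagram of morphisms in $\cG$ built from these data.

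First I would write out the coherence square explicitly after evaluation at $X$. The objects appearing are, up to the obvious bracketings, $(A\otimes B)\otimes C$ tensored with $X$ in its various parenthesizations: $((A\otimes B)\otimes C)\otimes X$, $(A\otimes B)\otimes(C\otimes X)$, $A\otimes(B\otimes(C\otimes X))$, $A\otimes((B\otimes C)\otimes X)$, and $(A\otimes(B\otimes C))\otimes X$. The edges are $(a|b)_{C,X}$, $a_{B,C\otimes X}$, $a_{B,X}$ composed appropriately (coming from $L(B,b)$ applied after $L(A,a)$), $A\otimes b_{C,X}$, and $\phi_{(A,a),(B,b),(C,c)}\otimes X = a_{B,C}\otimes X$. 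But this pentagon is precisely the pentagon that \emph{defines} $(a|b)_{C,X}$ from $a$ and $b$ (the diagram labeled $ABCX$ — or rather the defining pentagon for the tensor product in $\cN_l(\cG)$ with $X\otimes Y$ replaced by the single variable $C\otimes X$, i.e. the word $ABCX$ regarded with the last two letters fused). So the verification amounts to identifying the coherence square for $L$ with the defining pentagon for $a|b$, possibly after applying naturality of $a$ in its second argument to rewrite $a_{B,C\otimes X}$ versus $a_{B,C}$ and $a_{B,X}$.

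The key steps, in order: (1) Recall that $L$ on morphisms sends a morphism $f:(A,a)\to(B,b)$ in $\cN_l(\cG)$ to the natural transformation $f\otimes -$; this is a functor because $\otimes$ is a functor on $\cG$. (2) Check that $L_{(A,a),(B,b)}$, defined pointwise by $(a_{B,X})^{-1}$, is natural in $X$ — immediate from the naturality of $a$ in its second argument — and natural in the pair $(A,a),(B,b)$ with respect to morphisms of $\cN_l(\cG)$, which is exactly the compatibility diagram defining morphisms in $\cN_l(\cG)$ together with the analogous diagram (\ref{tpm}) for tensor products of morphisms. (3) Write the semigroupal coherence hexagon/pentagon for $(L,L_{-,-})$ evaluated at $X$ and observe it coincides (after inverting all the $a$-edges, which are isomorphisms) with the defining pentagon of $(a|b)_{C,X}$ — that is, the pentagon that was used in Section \ref{nuclsemi} to define the tensor product on $\cN_l(\cG)$, read with the object in the last tensor slot being $C\otimes X$ rather than a product $X\otimes Y$. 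Since that pentagon commutes by definition, the coherence axiom for $L$ holds.

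I expect the main obstacle to be purely bookkeeping: correctly matching the orientation and bracketing of the five edges of the $L$-coherence square with the five edges of the defining pentagon for $a|b$, keeping track of which $a$-components are inverted (because $L_{(A,a),(B,b)}$ goes $(A\otimes B)\otimes X\to A\otimes(B\otimes X)$, opposite to $a_{B,X}$) and making sure that the edge coming from $L$ applied to $\phi_{(A,a),(B,b),(C,c)}=a_{B,C}$ — namely $a_{B,C}\otimes X$ — lands in the right place. One small genuine check is that $\phi$ is indeed carried by $L$ to $a_{B,C}\otimes X$ under the identifications $L((A,a)\otimes(B,b)\otimes(C,c)) = (A\otimes B)\otimes C\otimes(-)$ etc.; this is just the formula $L(g)=g\otimes -$ applied to $g=a_{B,C}$. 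Once the square is drawn, commutativity is not a separate computation — it is literally the pentagon defining $a|b$ — so the proof is short.
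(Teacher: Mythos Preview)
Your proposal is correct and matches the paper's approach exactly: the paper also reduces the coherence axiom for $L$ to a single commutative diagram which, after unwinding the definitions of $L$, $L_{(A,a),(B,b)}=a_{B,-}^{-1}$ and $\phi_{(A,a),(B,b),(C,c)}=a_{B,C}$, is precisely the defining pentagon for $(a|b)_{C,X}$. Your discussion of the bookkeeping (orientations, which components are inverted, what $L(\phi)$ becomes) is accurate and covers everything the paper's diagram encodes.
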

\begin{proof}
All we need to check is the coherence axiom for the monoidal structure, which follows from the definition of the tensor product in $\cN_l(\cG)$: 

\xymatrix@C=-5pt{ L((A,a)((B,b)(C,c)))(X) \ar[rr]^{L(\phi_{(A,a),(B,b),(C,c)})(X)} \ar[dd]^>>>>>>>>{L_{(A,a),(B,b)(C,c)}(X)} \ar@{=}[dr] & &  L(((A,a)(B,b))(C,c))(X) \ar[dd]^>>>>>>>>{L_{(A,a)(B,b),(C,c)}(X)} \ar@{=}[dr] & \\ 
& (A(BC))X \ar[rr]^<<<<<<<<<<{a_{B,C}X}  |(.5){\hole} & & ((AB)C)X  \\ 
L(A,a)(L((B,b)(C,c))(X)) \ar[dd]^>>>>>>>>{L(A,a)L_{(B,b),(C,c)}(X)} \ar@{=}[dr] &  & L((A,a)(B,b))(L(C)(X)) \ar[dd]^>>>>>>>>{L_{(A,a),(B.b)}(L_{(C,c)}(X))} \ar@{=}[dr] & \\ 
& A((BC)X) \ar[uu]_{a_{BC,X}} & & (AB)(CX) \ar[uu]_{(a|b)_{C,X}} \\
L(A,a)(L(B,b)(L(C,c)(X))) \ar@{=}[rr] \ar@{=}[rd] & & (L(A,a)L(B,b))(L(C,c)(X)) \ar@{=}[rd] & \\ 
& A(B(CX)) \ar[uu]_<<<<<<<<{Ab_{C,X}}  |(.5){\hole} \ar@{=}[rr] & & A(B(CX)) \ar[uu]_{a_{B,CX}} } 
\end{proof}

Now consider the cartesian square $\cG\times \cG$ of a magmoidal category as a left $\cN_l(\cG)$-category by making the semigroupal category $\cN_l(\cG)$ to act on the first factor of $\cG\times \cG$. Define a natural collection of isomorphism $\otimes_{(A,a),X,Y}:L(A,a)(X)\otimes Y\to L(A,a)(X\otimes Y)$ by 
\begin{equation}\label{actfun}
\xymatrix@C=35pt{ L(A,a)(X)\otimes Y \ar[r]^{\otimes_{(A,a),X,Y}} \ar@{=}[d] & L(A,a)(X\otimes Y) \ar@{=}[d] \\
(A\otimes X)\otimes Y & A\otimes (X\otimes Y) \ar[l]_{a_{X,Y}} }
\end{equation}
\begin{prop}
The functor $\otimes:\cG\times\cG\to \cG$ between $\cN_l(\cG)$-categories is a $\cN_l(\cG)$-functor.
\end{prop}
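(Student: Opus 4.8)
The plan is to unwind the coherence diagram (\ref{cohfun}) completely in this case and to observe that, after the strict identifications coming from the two module structures are put in, it becomes \emph{literally} the pentagon that was used to define the tensor product of $\cN_l(\cG)$; so there is nothing to prove beyond bookkeeping.

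First I would compute the seven vertices of (\ref{cohfun}) with $F=\otimes$, $\cM=\cG\times\cG$ acted upon through the first factor, $\cM'=\cG$ with $L(A,a)(X)=A\otimes X$, and the functor structure from (\ref{actfun}). Writing an object of $\cM$ as a pair $(X,Y)$ and suppressing the identifications $L(A,a)(X)\otimes Y=(A\otimes X)\otimes Y$ and $L(A,a)(X\otimes Y)=A\otimes(X\otimes Y)$, the top row of (\ref{cohfun}) reads
$$((A\otimes B)\otimes X)\otimes Y \longrightarrow (A\otimes B)\otimes(X\otimes Y) \longrightarrow A\otimes(B\otimes(X\otimes Y)),$$
the bottom row reads
$$(A\otimes(B\otimes X))\otimes Y \longrightarrow A\otimes((B\otimes X)\otimes Y) \longrightarrow A\otimes(B\otimes(X\otimes Y)),$$
the left edge is $((A\otimes B)\otimes X)\otimes Y\to(A\otimes(B\otimes X))\otimes Y$, and the right edge is an identity.

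Next I would label the arrows. By (\ref{actfun}) the component $\otimes_{(A,a),X,Y}$ is $a_{X,Y}^{-1}$, and by the previous proposition the multiplication transformation $L_{(A,a),(B,b)}$ has component $a_{B,-}^{-1}$. Hence the top-left arrow $F_{AB,(X,Y)}=\otimes_{(A,a)\otimes(B,b),X,Y}$ is $(a|b)_{X,Y}^{-1}$; the top-middle arrow $L'_{A,B}(F(X,Y))$ is $a_{B,X\otimes Y}^{-1}$; the left edge $F(L_{A,B}(X,Y))$ is the whiskering $a_{B,X}^{-1}\otimes Y$ of the first-factor component; the bottom-left arrow $F_{A,L(B)(X,Y)}=\otimes_{(A,a),B\otimes X,Y}$ is $a_{B\otimes X,Y}^{-1}$; and the bottom-middle arrow $L'(A)(F_{B,(X,Y)})$ is $A\otimes b_{X,Y}^{-1}$. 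All arrows are isomorphisms, so commutativity of (\ref{cohfun}) is equivalent, after inverting every arrow, to the identity
$$(a|b)_{X,Y}\circ a_{B,X\otimes Y} = (a_{B,X}\otimes Y)\circ a_{B\otimes X,Y}\circ(A\otimes b_{X,Y}),$$
which is exactly the pentagon defining $(a|b)_{X,Y}$. (In the word notation of the Remark, diagram (\ref{cohfun}) for this functor carries the label $ABXY$, i.e.\ the very pentagon defining the tensor product of $\cN_l(\cG)$.) Thus the diagram commutes by the definition of $a|b$.

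The only genuine work is the bookkeeping in the previous two paragraphs: one must pin down the two strict identifications coming from the two module structures and keep the direction of each associativity component straight, so that the coherence square collapses onto the defining pentagon rather than onto a near-miss of it. Once the arrows are correctly named there is no further coherence input required. I would also note in passing that the $\cN_m(\cG)$- and $\cN_r(\cG)$-versions are proved identically, replacing the pentagon $ABXY$ by the pentagons labelled $XABY$ and $XYAB$ respectively.
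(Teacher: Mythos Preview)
Your proof is correct and follows the same route as the paper: both recognise that, once the strict identifications from the two module structures are inserted, diagram~(\ref{cohfun}) for $\otimes$ is exactly the pentagon $ABXY$ defining $(a|b)_{X,Y}$. The paper displays this as a single two-layer diagram (the $L$-notation layer connected by equalities to the tensor-notation pentagon), whereas you compute each arrow separately and then invert; the content is identical.
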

\begin{proof}
We need to check that the natural collection (\ref{actfun}) satisfies the coherence condition (\ref{cohfun}). It follows from the diagram: 

\xymatrix@C=-4pt{ L((A,a)(B,b))(XY) \ar[rr]^{L_{(A,a),(B,b)}(X)Y} \ar[dd]^>>>>>>>>{\otimes_{(A,a)(B,b),X,Y}} \ar@{=}[dr] & &  L(A,a)(L(B,b)(X))Y \ar[dd]^>>>>>>>>{\otimes_{(A,a),L(B,b)(X),Y}} \ar@{=}[dr] & \\ 
& ((AB)X)Y \ar[rr]^<<<<<<<<<<{a_{B,X}Y}  |(.5){\hole} & & (A(BX))Y  \\ 
L((A,a)(B,b))(XY)\ar[dd]^>>>>>>>>{L_{(A,a),(B.b)}(XY)}\ar@{=}[dr] &  & L(A,a)(L(B,b)(X)Y) \ar[dd]^>>>>>>>>{L(A,a)(\otimes_{(B,b),X,Y})}  \ar@{=}[dr] & \\ 
& (AB)(XY) \ar[uu]_<<<<<<<<{(a|b)_{X,Y}} & & \A((BX)Y) \ar[uu]_{a_{BX,Y}} \\
L(A,a)(L(B,b)(XY)) \ar@{=}[rr] \ar@{=}[rd] & & (L(A,a)L(B,b))(XY)) \ar@{=}[rd] & \\ 
& A(B(XY)) \ar[uu]_<<<<<<<<{a_{B,XY}}  |(.5){\hole} \ar@{=}[rr] & & A(B(XY))  \ar[uu]_{Ab_{X,Y}} } 
\end{proof}

In the next proposition we characterise left nucleus $\cN_l(\cG)$ as a category of certain endofunctors on $\cG$. For a magmoidal category $\cG$ a functor $F:\cG\to \cG$ will be called (right) $\cG$-{\em invariant} if it is equipped with a natural collection of isomorphisms $f_{X,Y}:F(X\otimes Y)\to F(X)\otimes Y$. A {\em morphism} $c:F\to G$ of $\cG$-invariant functors is a natural transformation such that the diagram
\xymatrix{ F(X\otimes Y) \ar[r]^{f_{X,Y}} \ar[d]^{c_{X\otimes Y}} & F(X)\otimes Y \ar[d]^{c_X\otimes Y} \\
G(X\otimes Y) \ar[r]^{g_{X,Y}}  & G(X)\otimes Y } 

commutes. Define $\End_\cG(\cG)$ to be the category of $\cG$-invariant functors. 
The composite of two $\cG$-invariant functors $(F,f),(G,g)$ has a structure of $\cG$-invariant functor: 

\xymatrix{ (F\circ G)(X\otimes Y) \ar[rr]^{(f\circ g)_{X,Y}} \ar@{=}[d] & & (F\circ G)(X)\otimes Y \ar@{=}[d] \\
F(G(X\otimes Y)) \ar[r]^{F(g_{X,Y})} & F(G(X)\otimes Y) \ar[r]^{f_{G(X),Y}} & F(G(X))\otimes Y }
\begin{lem}
The category $\End_\cG(\cG)$ is strict monoidal. 
\end{lem}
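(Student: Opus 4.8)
The plan is to verify the three strict-monoidal axioms for $\End_\cG(\cG)$: that the composition of $\cG$-invariant functors (together with the $\cG$-invariant structure defined above) is strictly associative and strictly unital, and that the composite of morphisms of $\cG$-invariant functors is again such a morphism. Since the underlying category of $\End_\cG(\cG)$ is a full subcategory of the strict monoidal category $\End(\cG)$ of endofunctors under composition, all the bookkeeping about objects and morphisms is inherited; the only genuine content is that the \emph{invariance structure} on a composite is computed compatibly.

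First I would treat the unit. The identity functor $id_\cG$ carries the trivial $\cG$-invariant structure with $(id)_{X,Y} = id_{X\otimes Y}:id(X\otimes Y)\to id(X)\otimes Y$. Plugging $(F,f) = (id,id)$ or $(G,g) = (id,id)$ into the displayed formula for $(f\circ g)_{X,Y}$ collapses it on the nose (one of the two arrows is an identity, and $f_{G(X),Y}$ or $F(g_{X,Y})$ becomes $f_{X,Y}$ or $g_{X,Y}$), so $(id,id)$ is a strict two-sided unit. Then I would check associativity: for three $\cG$-invariant functors $(F,f),(G,g),(H,h)$, expand $((f\circ g)\circ h)_{X,Y}$ and $(f\circ(g\circ h))_{X,Y}$ using the definition. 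Both unwind to the same composite $$F(G(H(X\otimes Y))) \xrightarrow{F(G(h_{X,Y}))} F(G(H(X)\otimes Y)) \xrightarrow{F(g_{H(X),Y})} F(G(H(X))\otimes Y) \xrightarrow{f_{G(H(X)),Y}} F(G(H(X)))\otimes Y,$$ once one applies functoriality of $F$ to rewrite $F\bigl((g\circ h)_{X,Y}\bigr) = F(g_{H(X),Y})\circ F(G(h_{X,Y}))$; the bracketing of the three arrows is immaterial, so the two structures coincide strictly, not just up to isomorphism. Finally I would check that if $c:(F,f)\to(F',f')$ and $d:(G,g)\to(G',g')$ are morphisms of $\cG$-invariant functors then $c\ast d : F\circ G \to F'\circ G'$ (horizontal composite, i.e. $(c\ast d)_X = c_{G'(X)}\circ F(d_X) = F'(d_X)\circ c_{G(X)}$) is a morphism of $\cG$-invariant functors: this is a pasting of the naturality square for $c$, the defining square for $d$, and the defining square for $c$, fitting together precisely because of the two equivalent formulas for $(c\ast d)_X$.

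The only mildly delicate point — and the one I expect to be the main obstacle, though it is still routine — is making the associativity check honest: one must be careful that "apply $F$ to the definition of $(g\circ h)_{X,Y}$" is legitimate, i.e. that $F$ being a functor gives $F(f_{G(X),Y}\circ F(g_{X,Y}))$-type identities, and that the nesting $F(G(-))$ versus $(F\circ G)(-)$ is a strict equality of functors (which it is, by the strictness of composition in $\End(\cG)$). No coherence pentagon is needed here because there is no associativity constraint in sight — everything is on the nose — so unlike the earlier theorems in the paper this lemma requires no diagram of Stasheff type, just the two triangle/square pastings above. I would therefore present the proof as: (i) unit is strict; (ii) composition of invariant structures is strictly associative by functoriality; (iii) horizontal composition of morphisms respects the invariant structure. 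Each is a one- or two-step diagram chase, and together they give that $\End_\cG(\cG)$ is strict monoidal.
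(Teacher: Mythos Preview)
Your proposal is correct and follows essentially the same approach as the paper: the paper's proof consists solely of the diagram chase showing that the horizontal composite of two morphisms of $\cG$-invariant functors is again such a morphism (your step (iii)), explicitly stating that this is ``all we need to verify.'' You are simply more thorough than the paper in also spelling out the strict unit and the strict associativity of the invariance structure on composites, which the paper silently regards as immediate.
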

\begin{proof}
All we need to verify is that the horizontal composition of morphisms of $\cG$-invariant functors preserves $\cG$-invariant structure of the composition. This follows from the commutative diagram: 
$$\xygraph{ !{0;/r8.5pc/:;/u6.5pc/::}[]*+{(F\circ G)(XY)}
( :@{=}[r(.5)u(.5)]*+{F(G(XY))} 
  ( :[r(1.2)]*+{F(G(X)Y)} ^{F(g_{X,Y})}
    ( :[r(1.2)]*+{F(G(X))Y}="ru" ^{f_{G(X),Y}}
      :[d]*+{F'(G(X))Y}="rm" ^{c_{G(X)}Y}
      :[d]*+{F'(G'(X))Y}="rd" ^{F(d_X)Y}
    ,
      :[r(.3)d]*+{F'(G(X)Y)} ^<<<<<<<<{c_{G(X)Y}}  |(.5){\hole}
      ( : "rm" ^{f'_{G(X),Y}}  |(.445){\hole}
      ,
        :[l(.3)d]*+{F'(G'(X)Y)}="md" ^{F'(d_XY)}
        : "rd" ^{f'_{G'(X),Y}}  |(.58){\hole}
      )
    ,
      :[l(.3)d]*+{F(G'(X)Y)}="m" _<<<<<<<<{F(d_XY)} |(.5){\hole}
      : "md" _{c_{G'(X)Y}}
    )
  ,
    :[d]*+{F(G'(XY))} ^<<<<<<<{F(d_{XY})} |(.5){\hole}
    ( : "m" ^{F(g'_{X,Y")}}
    ,
     :[d]*+{F'(G'(XY))}="l" ^{c_{G'(XY)}}
     : "md" ^{F'(g'_{X,Y})}
    )
  )
,
  :[r(2.4)]*+{(F\circ G)(X)Y} ^{(f\circ g)_{X,Y}}
  ( :@{=}"ru" 
  ,
    :[dd]*+{(F'\circ G')(X)Y}="dr" 
    :@{=}"rd" 
  )
,
  :[dd]*+{(F'\circ G')(XY)} ^{(c\circ d)_{XY}} 
  ( :@{=}"l" 
  ,
    :"dr" ^{(f'\circ g')_{X,Y}}
  )
)
}$$
\end{proof}

Now we extend the (left) action $L:\cN_l(\cG)\to \End(\cG)$ to a functor $L:\cN_l(\cG)\to \End_\cG(\cG)$ by the assignment $(A,a)\mapsto (L(A),L(a))$, where the structure $L(a)$ of $\cG$-invariant functor on $L(A)$ is defined as follows: 

\xymatrix{ L(A)(X\otimes Y) \ar[r]^{L(a)_{X,Y}} \ar@{=}[d] & L(A)(X)\otimes Y  \ar@{=}[d] \\
A\otimes(X\otimes Y) \ar[r]^{a_{X,Y}} & (A\otimes X)\otimes Y } 

\begin{prop}
Let $\cG$ be a unital magmoidal category.
Then the functor $L:\cN^1_l(\cG)\to \End_\cG(\cG)$ is a monoidal equivalence. 
\end{prop}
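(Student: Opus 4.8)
The plan is to construct an explicit inverse functor and check that the two composites are (naturally isomorphic to) the identity. First I would observe that the claim really has two independent halves: that $L$ restricted to $\cN^1_l(\cG)$ is fully faithful, and that it is essentially surjective onto $\End_\cG(\cG)$. For full faithfulness, note that a morphism $(A,a)\to(B,b)$ in $\cN^1_l(\cG)$ is a morphism $f:A\to B$ in $\cG$ compatible with the $a,b$, while a morphism $(L(A),L(a))\to(L(B),L(b))$ in $\End_\cG(\cG)$ is a natural transformation $c:L(A)\Rightarrow L(B)$ compatible with $L(a),L(b)$ in the sense of the square defining morphisms of $\cG$-invariant functors. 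The unit object $1$ of $\cG$ lets us recover $f$ from $c$: evaluate $c$ at $1$ and compose with $r$, i.e.\ $f = r_B\circ c_1\circ r_A^{-1}:A\to B$; naturality of $c$ together with naturality of $r$ shows $c_X = f\otimes X$ for all $X$ (this is where the unital hypothesis is essential — without a unit object, $L$ would not be full, since a natural transformation of the functors $A\otimes-$ need not come from a morphism $A\to B$). Conversely every $c$ of the form $f\otimes-$ is visibly natural, and the compatibility squares for $(A,a)\to(B,b)$ and for $(L(A),L(a))\to(L(B),L(b))$ then literally coincide. So $L$ is fully faithful.

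For essential surjectivity, given a $\cG$-invariant endofunctor $(F,f)$ with $f_{X,Y}:F(X\otimes Y)\to F(X)\otimes Y$, set $A := F(1)$ and define $a_{X,Y}:A\otimes(X\otimes Y)\to(A\otimes X)\otimes Y$ by transporting $f$ along the isomorphisms $F(1\otimes Z)\cong F(Z)$ coming from $l$: explicitly $a_{X,Y} = (F(l_X)^{-1}\otimes Y)\circ f_{X,Y}\circ F(l_{X\otimes Y})$ after identifying $F(1\otimes -)$ with $F(-)$, or more carefully build the comparison $L(F(1))\cong F$ from $l$ and then pull back $f$. The point is that $(F,f)$ being just a $\cG$-invariant functor — no extra coherence imposed — matches exactly the fact that $\cN_l(\cG)$ imposes no coherence on $a$ either; so $(A,a)$ lands in $\cN_l(\cG)$ for free, and the normalisation conditions defining $\cN^1_l(\cG)$ are precisely the two triangles that must hold, which I would extract from the identity $f_{1,Y}$ behaving correctly under $l$ and $r$ (using only functoriality of $F$ and the axioms $l_1=r_1$, naturality of $l,r$). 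One then checks $L(A,a)\cong(F,f)$ as $\cG$-invariant functors via the natural isomorphism $F(1)\otimes X\xrightarrow{\ l\text{-type}\ }F(X)$ induced by $f$ and $l$.

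The key remaining step — and the main obstacle — is verifying that $L$ is \emph{monoidal} as an equivalence, i.e.\ that the monoidal structure $L_{(A,a),(B,b)} = a_{B,-}$ defined earlier is carried to the strict monoidal structure (composition) of $\End_\cG(\cG)$ compatibly with the $\cG$-invariant structures, and that the inverse is monoidal too. Concretely I must check that under the correspondence $(A,a)\mapsto(L(A),L(a))$ the composite $\cG$-invariant structure $(L(a)\circ L(b))_{X,Y}$ on $L(A)\circ L(B)$ — which by the formula in the lemma is $f_{G(X),Y}\circ F(g_{X,Y})$ with $F=L(A),G=L(B)$, unwinding to $(a_{B,X}\otimes Y)\circ(A\otimes b_{X,Y})$ — agrees with $(a|b)_{X,Y}$, the $\cG$-invariant structure of $L(A\otimes B)=L((A,a)\otimes(B,b))$. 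But that agreement is exactly the pentagon defining $(a|b)_{X,Y}$ in section~\ref{nuclsemi}; so it holds by definition. Hence $L$ intertwines the tensor products strictly on objects, and one last naturality diagram (essentially diagram (\ref{tpm}) read through $L$) shows it does so on morphisms. Likewise the unit object: $L(1,i) = (L(1),L(i))$, and $L(i)_{X,Y}$ unwinds to $i_{X,Y} = (l_X^{-1}\otimes Y)\circ l_{X\otimes Y}$, which is precisely the canonical $\cG$-invariant structure making $L(1)\cong\mathrm{id}_\cG$ the monoidal unit of $\End_\cG(\cG)$; so $L$ is unital. Combining full faithfulness, essential surjectivity, and monoidality gives the monoidal equivalence. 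The only genuine care needed is bookkeeping with the coherence isomorphisms $l,r$ in the essential-surjectivity and unit steps; everything structural reduces to diagrams already established.
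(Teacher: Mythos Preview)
Your overall strategy coincides with the paper's: construct the quasi-inverse $(F,f)\mapsto(F(1),\overline f)$ and check that $L$ respects the monoidal structures. Your treatment is in fact more detailed than the paper's in two places --- you spell out full faithfulness via evaluation at $1$ (which the paper omits entirely), and you correctly identify that compatibility of $L_{(A,a),(B,b)}$ with the $\cG$-invariant structures is exactly the pentagon defining $a|b$. (One small slip: the composite $\cG$-invariant structure on $L(A)\circ L(B)$ unwinds to $a_{B\otimes X,Y}\circ(A\otimes b_{X,Y})$, not $(a_{B,X}\otimes Y)\circ(A\otimes b_{X,Y})$; the latter does not even type-check, but your conclusion that the relevant square is the defining pentagon is still correct.)

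There is, however, a genuine gap in your essential-surjectivity step. You assert that the normalisation conditions on $(F(1),\overline f)$ ``follow from the identity $f_{1,Y}$ behaving correctly under $l$ and $r$, using only functoriality of $F$ and the axioms $l_1=r_1$, naturality of $l,r$.'' This is not so. A direct computation (using $\eta_Z = f_{1,Z}\circ F(l_Z)^{-1}$ and naturality of $f$ and $l$) reduces the two normalisation triangles for $\overline f$ to the conditions
\[
r_{F(1)}\circ f_{1,1}=F(l_1)\qquad\text{and}\qquad r_{F(X)}\circ f_{X,1}=F(r_X),
\]
and neither of these is forced by the definition of a $\cG$-invariant endofunctor. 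For a counterexample, take $F=\mathrm{id}_\cG$ with $f_{X,Y}$ a nontrivial natural automorphism of $\otimes$ (e.g.\ a nonidentity scalar on $\Vect$): then $r_X\circ f_{X,1}\neq r_X$. So the quasi-inverse as written lands only in $\cN_l(\cG)$, not in $\cN^1_l(\cG)$. The paper's proof is silent on this point as well, so this is arguably a defect in the proposition as stated (one should either restrict $\End_\cG(\cG)$ to functors satisfying the unit compatibility above, or replace $\cN^1_l(\cG)$ by $\cN_l(\cG)$); but your explicit claim that the normalisation is automatic is incorrect and should be withdrawn.
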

\begin{proof}
First we note that the monoidal structure $L_{X,Y}$ of the functor $L:\cN^1_l(\cG)\to \End(\cG)$ is compatible with the $\cG$-invariant structures (a morphism in $\End_\cG(\cG)$) thus making the extended functor $L:\cN^1_l(\cG)\to \End_\cG(\cG)$ monoidal. This follows from the commutative diagram:

$$
\xygraph{ !{0;/r6pc/:;/u6pc/::}[]*+{L(AB)(XY)}="tl" 
(
  :[u(.7)rr]*+{L(AB)(X)Y} ^{L(a|b)_{X,Y}}
  (
    :[d(.7)rr]*+{L(A)(L(B)(X))Y}="ur" ^{L_{A,B}(X)Y} 
    :@{=}[d(.85)]*+{A((BX)Y)}="dr" 
    :[u(.7)ll]*+{((AB)X)Y}="du" ^{a_{B,X}Y}
  ,
    :@{=}"du"
  )
,
  :@{=}[d(.85)]*+{(AB)(XY)}="dl" 
  :"du" _{(a|b)_{X,Y}}
,
  :[r(.8)d]*+{L(A)(L(B)(XY))}="udl" ^(.4){L_{A,B}(XY)}
  (
    :[r(2.4)]*+{L(A)(L(B)(X)Y)}="udr" ^{L(A)((L(b)_{X,Y})}
    :"ur" ^(.6){L(a)_{L(B)(X),Y}}
  ,
    :@{=}[d(.85)]*+{A(B(XY))}="ddl"
    ( :"dl" ^{a_{B,XY}}
    ,
      :[r(2.4)]*+{A((BX)Y)} ^{Ab_{X,Y}}
      ( :"dr" _{a_{BX,Y}}
      ,
        :@{=}"udr"
      )
    )
  )
)
}$$

To see that $L$ is an equivalence it is enough to note that the assignment $(F,f)\mapsto (F(1),\overline f)$ defines a functor quisi-inverse to $L$. Here $\overline f_{X,Y}:F(1)\otimes (X\otimes Y) \to (F(1)\otimes X)\otimes Y$ is defined by 

\xymatrix@C=40pt{ F(1)\otimes(X\otimes Y) \ar[d]^{\overline f_{X,Y}} & F(1\otimes(X\otimes Y)) \ar[l]_{F_{1,X\otimes Y}} \ar[r]^{F(l_{X\otimes Y})} & F(X\otimes Y) \ar[d]^{F_{X,Y}} \\
(F(1)\otimes X)\otimes Y & F(1\otimes X)\otimes Y \ar[l]_{F_{1,X}\otimes Y} \ar[r]^{F(l_X)\otimes Y} & F(X)\otimes Y }
\end{proof}

\section{Multiplicants of functors}

Now let $F:\cG\to\caH$ be a functor between semigroupal (monoidal) categories. Define $\cM_l(F)$ to be the category of pairs $(A,a)$,
where $A\in\cG$ and $a$ is a natural family of isomorphisms $a_X:F(A\otimes X)\to F(A)\otimes F(X)$. A morphism from
$(A,a)$ to $(B,b)$ is a morphism $f:A\to B$ in $\cG$ such that the following diagram commutes for each $X\in\cG$
\xymatrix{ F(A\otimes X) \ar[d]_{F(f\otimes X)}\ar[r]^{a_X} & F(A)\otimes F(X) \ar[d]^{F(f)\otimes F(X)}
\\
F(B\otimes X) \ar[r]^{b_X} & F(A)\otimes F(X) }

Note that the assignment $(A,a)\mapsto A$ defines a functor $\cM_l(F)\to \cG$ (the {\em forgetful} functor).

We can define a tensor product on $\cM_l(F)$ by $(A,a)\otimes (B,b) = (A\otimes B,a|b)$, where the natural family
$a|b$ is given by the diagram

\xymatrix{F(A\otimes(B\otimes X)) \ar[d]_{F(\phi_{A,B,X})}\ar[r]^{a_{B\otimes X}} & F(A)\otimes F(B\otimes X)
\ar[rr]^{F(A)\otimes b_X} & & F(A)\otimes (F(B)\otimes F(X)) \ar[d]_{\psi_{F(A),F(B),F(X)}}
\\
F((A\otimes B)\otimes X) \ar[r]^{(a|b)_X} & F(A\otimes B)\otimes F(X) \ar[rr]^{a_B\otimes F(X)} & & (F(A)\otimes
F(B))\otimes F(X) }

\begin{theo}
The category $\cM_l(F)$ is semigroupal. The forgetful functor $\cM_l(F)\to \cG$ is a strict semigroupal functor. The composition of the forgetful functor with $F$ is a semigroupal  functor $\overline F:\cM_l(F)\to \caH$ with semigroupal structure: $$\overline F_{(A,a),(B,b)} = a_B:F(A\otimes B)\to F(A)\otimes F(B).$$
\end{theo}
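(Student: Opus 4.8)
The plan is to develop $\cM_l(F)$ exactly as $\cN_l(\cG)$ was developed in Section~\ref{nuclsemi}: carry the associativity constraint of $\cG$ over to $\cM_l(F)$, and verify the axioms by diagram chases of the same kind, now taking place in $\caH$ and involving the one-variable structure $a_X$ in place of the two-variable structure of $\cN_l$. First the cheap points. The family $a|b$ is a composite of natural isomorphisms, hence a natural family of isomorphisms, so the tensor product is well defined on objects; morphisms of $\cM_l(F)$ are closed under composition (stack the defining squares), so $\cM_l(F)$ is a category with a tensor product on objects, and the forgetful functor $U\colon\cM_l(F)\to\cG$, $(A,a)\mapsto A$, is faithful. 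That $f\otimes g$ is again a morphism of $\cM_l(F)$ whenever $f,g$ are is a diagram of exactly the shape of (\ref{tpm}) but over $\caH$ and with one-variable structure: its interior regions commute by naturality of $a,a',b,b'$ and by functoriality of $\otimes$ and $F$, and its two pentagonal faces are the definitions of $a|b$ and $a'|b'$. Hence $\otimes$ is a functor on $\cM_l(F)$ and $U$ is strictly magmoidal.

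Next, declare the associativity constraint of $\cM_l(F)$ to be the one inherited from $\cG$, $\phi^{\cM_l(F)}_{(A,a),(B,b),(C,c)}:=\phi_{A,B,C}$. The crux is to check that this is genuinely a morphism
$$(A,a)\otimes\big((B,b)\otimes(C,c)\big)\longrightarrow\big((A,a)\otimes(B,b)\big)\otimes(C,c)$$
in $\cM_l(F)$, i.e.\ that for every $X$ the square with horizontal edges $(a|(b|c))_X$ and $((a|b)|c)_X$ and vertical edges $F(\phi_{A,B,C}\otimes X)$ and $F(\phi_{A,B,C})\otimes F(X)$ commutes. I would prove this by producing the analog for $\cM_l$ of the Stasheff polytope / $A_\infty$-map diagram used in Section~\ref{nuclsemi}: a large diagram whose boundary is the required square and whose interior subdivides into regions that commute by naturality of $a,b,c$, by functoriality of $\otimes$ and $F$, by the definitions of the iterated tensor products $b|c$, $a|b$, $a|(b|c)$ and $(a|b)|c$, and --- decisively --- by the pentagon for $\phi$ on $A,B,C,X$ in $\cG$ together with the pentagon for $\psi$ on $F(A),F(B),F(C),F(X)$ in $\caH$. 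Organizing this diagram and checking each of its faces is the step I expect to be the actual work.

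Granting it, the pentagon axiom for $\phi^{\cM_l(F)}$ follows formally: on any four objects both sides of the pentagon are morphisms of $\cM_l(F)$ (tensor products and composites of the morphisms already produced), and they become equal after applying the faithful functor $U$, by the pentagon for $\phi$ in $\cG$. Thus $\cM_l(F)$ is semigroupal, and since $U$ preserves the tensor product and the associativity constraint on the nose it is strictly semigroupal.

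Finally, for $\overline F=F\circ U$ with $\overline F_{(A,a),(B,b)}:=a_B$. Naturality of $\overline F_{(A,a),(B,b)}$ in each argument follows by factoring $f\otimes g=(f\otimes B')\circ(A\otimes g)$ and pasting the naturality square of the family $a$ (with respect to $g\colon B\to B'$) on top of the square expressing that $f$ is a morphism of $\cM_l(F)$, evaluated at $B'$. And the hexagon expressing that $\overline F$ is semigroupal is, under the identifications $\overline F_{(A,a)\otimes(B,b),(C,c)}=(a|b)_C$, $\overline F_{(A,a),(B,b)\otimes(C,c)}=a_{B\otimes C}$ and $\overline F(\phi^{\cM_l(F)}_{(A,a),(B,b),(C,c)})=F(\phi_{A,B,C})$, precisely the diagram defining $a|b$ with $X$ specialized to $C$. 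Hence $\overline F$ is a semigroupal functor with the stated structure, and all three assertions of the theorem are proved.
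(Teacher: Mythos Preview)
Your proposal is correct and follows essentially the same route as the paper's own proof: first the cube-shaped diagram showing $f\otimes g$ is a morphism of $\cM_l(F)$, then the large $A_\infty$-type diagram (built from the definitions of $a|b$, $b|c$, $a|(b|c)$, $(a|b)|c$, naturality, and the pentagons for $\phi$ in $\cG$ and $\psi$ in $\caH$) showing $\phi_{A,B,C}$ is a morphism, with the pentagon axiom and the semigroupal structure on $\overline F$ following formally. Your explanations of why the pentagon follows (faithfulness of $U$) and why the coherence hexagon for $\overline F$ is literally the defining diagram of $a|b$ at $X=C$ are in fact more explicit than the paper's treatment of those points.
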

\begin{proof}
The following diagram proves that the tensor product $f\otimes g$ of morphisms $f:(A,a)\to(C,c),\ g:(B,b)\to(D,d)$
is a morphism from $(A,a)\otimes(B,b)$ to $(C,c)\otimes(D,d)$:

\xymatrix@C=-20pt{ & F(C\otimes(D\otimes X)) \ar'[d][dd]^{c_{D\otimes X}}\ar[rr]^{F(\phi_{C,D,X})} & & F((C\otimes
D)\otimes X) \ar[dd]_{(c|d)_X}
\\
F(A\otimes(B\otimes X)) \ar[ur]^{F(f\otimes (g\otimes I))}\ar[dd]_{a_{B\otimes X}}\ar[rr]^{F(\phi_{A,B,X})} & &
F((A\otimes B)\otimes X) \ar[ur]^{F((f\otimes g)\otimes I)}\ar[dd]_{(a|b)_X}
\\
 & F(C)\otimes F(D\otimes X) \ar[dd]_{I\otimes d_X} & & F(C\otimes D)\otimes F(X) \ar[dd]_{c_D\otimes I}
\\
F(A)\otimes F(B\otimes X) \ar[ur]^{F(f)\otimes F(g\otimes I)}\ar[dd]_{I\otimes b_X} & & F(A\otimes B)\otimes F(X)
\ar[ur]^{F(f\otimes g)\otimes I}\ar[dd]_<<<<{a_B\otimes I}
\\
 & F(C)\otimes(F(D)\otimes F(X)) \ar'[r][rr]^<<<<{\phi_{F(C),F(D),F(X)}} & & (F(C)\otimes F(D))\otimes F(X)
\\
F(A)\otimes(F(B)\otimes F(X)) \ar[rr]_{\phi_{F(A),F(B),F(X)}}\ar[ur]^{F(f)\otimes(F(g)\otimes I)} & & (F(A)\otimes
F(B))\otimes F(X) \ar[ur]^{(F(f)\otimes F(g))\otimes I} }

It follows from the diagram

$$
\xygraph{ !{0;/r6.5pc/:;/u6.5pc/::}[]*+{F(A(B(CX)))}="l" (
  :[u(.7)rr]*+{F((AB)(CX))} ^{F(\phi)}
  ( :[d(.7)rr]*+{F(((AB)C)X)}="r" ^{F(\phi)}
    :[d(1.35)]*+{F((AB)C)F(X)}="r1" ^{((a|b)|c)_X}
    :[d(1.35)]*+{(F(AB)F(C))F(X)}="r2"  ^{(a|b)_CF(X)}
    :[d(1.35)]*+{((F(A)F(B))F(C))F(X)}="r3" ^{(a_BF(C))F(X)}
    ,
    :[d(1.35)]*+{F(AB)F(CX)}="t1" _{(a|b)_{CX}}
    ( :[l(.3)d(1.35)]*+!R(.5){(F(A)F(B))F(CX)}="tl2" _{a_BF(CX)} |(.263){\hole}
      :[r(.3)d(1.35)]*+{(F(A)F(B))(F(C)F(X))}="t3" _{(F(A)F(B))c_X}
      : "r3" ^(.3){\psi} |*++++{\hole}
      ,
      :[r(.3)d(1.35)]*+!L(.5){F(AB)(F(C)F(X))}="tr2" ^{F(AB)c_X} |(.263){\hole}
      ( : "t3" ^{a_B(F(C)F(X))}
        ,
        : "r2" ^(.3){\psi} |*++++{\hole}
      )
    )
  )
  ,
  :[r(.8)d]*+{F(A((BC)X))}="lb" ^{F(A\phi)}
  ( :[r(2.4)]*+{F((A(BC))X)}="rb" ^(.3){F(\phi)}
    ( : "r" ^{F(\phi X)}
      ,
      :[d(1.35)]*+{F(A(BC))F(X)}="rb1" _{(a|(b|c))_X}
      ( : "r1" ^(.7){F(\phi)F(X)}
        ,
        :[d(1.35)]*+{(F(A)F(BC))F(X)}="rb2" ^{a_{BC}F(X)}
        :[d(1.35)]*+{(F(A)(F(B)F(C)))F(X)}="rb3" _{(F(A)b_C)F(X)}
        : "r3" _{\psi F(X)}
      )
    )
    ,
    :[d(1.35)]*+{F(A)F((BC)X)}="lb1" ^{a_{(BC)X}}
    :[d(1.35)]*+{F(A)(F(BC)F(X))}="lb2" _{F(A)(b|c)_X}
    ( :[d(1.35)]*+{F(A)((F(B)F(C))F(X))}="lb3" ^{F(A)(b_CF(X))}
      : "rb3" ^{\psi}
      ,
      : "rb2" ^{\psi}
    )
  )
  ,
  :[d(1.35)]*+{F(A)F(B(CX))}="l1" _{a_{B(CX)}}
  ( : "lb1" ^(.3){F(A)F(\phi)}
    ,
    :[d(1.35)]*+{F(A)(F(B)F(CX))}="l2" _{F(A)b_{CX}}
    ( : "tl2" ^(.7){\psi} |*++++{\hole}
      ,
      :[d(1.35)]*+{F(A)(F(B)(F(C)F(X)))}="l3" _{F(A)(F(B)c_X)}
      ( : "t3" ^(.7){\psi} |*++++{\hole}
        ,
        : "lb3" _{F(A)\psi}
      )
    )
  )
)
}$$
that the square \xymatrix{ F((A(BC))X) \ar[rr]^{(a|(b|c))_X} \ar[d]_{F(\phi X)} & & F(A(BC))F(X) \ar[d]^{F(\phi)F(X)} \\ F(((AB)C)X) \ar[rr]^{((a|b)|c)_X} & & F((AB)C)F(X) }

commutes, which means that the associativity constraint of $\cG$
\newline
$\phi_{A,B,C}:A\otimes (B\otimes C)\to (A\otimes B)\otimes C$ is a morphims of $\cM_l(F)$: $$\phi_{(A,a),(B,b),(C,c)} = \phi_{A,B,C}.$$ Thus $\cM_l(F)$ is semigroupal and the forgetful functor $\cM_l(F)\to \cG$ is strict semigroupal. It follows from the definition of $\cM_l(F)$ that $\overline F$ is semigroupal.
\end{proof}

\begin{rem}
\end{rem}
The semigroupal category $\cM_l(F)$ becomes monoidal if we assume that the functor $F$ {\em preserves} monoidal unit, i.e. there is given an isomorphism $\epsilon:F(1)\to 1$. It can be verified that the object $(1,\iota)\in \cM_l(F)$ is a monoidal unit. Here $\iota_X$ is the composition: 

\xymatrix{ F(1\otimes X) \ar[r]^{F(l_X)} & F(X) & 1\otimes F(X)  \ar[l]_{l_{F(X)}} & F(1)\otimes F(X) \ar[l]_{\epsilon\otimes I} }

The semigroupal functors $\cM_l(F)\to \cG,\caH$ allow us to define $\cM_l(F)$-actions on the categories $\cG,\ \caH$. In the next proposition we show that the functor $F$ preserves these actions.
\begin{prop}
The functor $F:\cG\to \caH$ has a natural structure of a $\cM_l(F)$-functor.
\end{prop}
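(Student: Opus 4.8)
The plan is to exhibit, for the functor $F:\cG\to\caH$ regarded as a functor between $\cM_l(F)$-categories, a natural collection of isomorphisms $F_{(A,a),X}:F(L(A,a)(X))\to L'(A,a)(F(X))$ and check that it satisfies the coherence diagram (\ref{cohfun}). Here $\cG$ is a left $\cM_l(F)$-category via the strict semigroupal forgetful functor $\cM_l(F)\to\cG$ followed by $L:\cN_l(\cG)\to\End(\cG)$ — wait, more directly, via $\cM_l(F)\to\cG$ and the tensor action of $\cG$ on itself, so that $L(A,a)(X)=A\otimes X$; and $\caH$ is a left $\cM_l(F)$-category via $\overline F:\cM_l(F)\to\caH$ and the tensor action of $\caH$ on itself, so that $L'(A,a)(Y)=F(A)\otimes Y$. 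With these identifications the obvious candidate for $F_{(A,a),X}$ is exactly the isomorphism $a_X:F(A\otimes X)\to F(A)\otimes F(X)$ that is part of the datum of the object $(A,a)$. Naturality in $X$ is the defining naturality of the family $a$, and naturality in $(A,a)$ (i.e.\ compatibility with morphisms $f:(A,a)\to(B,b)$ in $\cM_l(F)$) is precisely the commuting square that defines morphisms of $\cM_l(F)$.

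The substantive point is the coherence hexagon (\ref{cohfun}) with $L\leadsto$ the $\cG$-action, $L'\leadsto$ the $\caH$-action, $F_{A,B}=a_B$ (the action-structure isomorphism of $L$ from section \ref{actnucl}, namely $a_{B,X}$ — here it is the semigroupal structure $L_{(A,a),(B,b)}$, which is $a_{B,X}$), and $L'_{A,B}=\overline F_{(A,a),(B,b)}=a_B$ on the $\caH$-side combined with $\psi$. Instantiating (\ref{cohfun}) for objects $(A,a),(B,b)\in\cM_l(F)$ and $X\in\cG$, the outer boundary becomes a diagram whose vertices are $F((A\otimes B)\otimes X)$, $F(A\otimes B)\otimes F(X)$, $(F(A)\otimes F(B))\otimes F(X)$ across the top, and $F(A\otimes(B\otimes X))$, $F(A)\otimes F(B\otimes X)$, $F(A)\otimes(F(B)\otimes F(X))$ across the bottom, linked by $F(\phi_{A,B,X})$, $(a|b)_X$, $a_{B\otimes X}$, $F(A)\otimes b_X$, and $\psi_{F(A),F(B),F(X)}$. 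But this is, term for term, the \emph{defining diagram} of the tensor product $a|b$ in $\cM_l(F)$: it is the pentagon-shaped diagram displayed just before the theorem stating that $\cM_l(F)$ is semigroupal. So the coherence condition holds \emph{by definition} of the monoidal structure on $\cM_l(F)$, and nothing further needs to be computed.

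Accordingly I would structure the proof as: (1) spell out the two $\cM_l(F)$-category structures on $\cG$ and $\caH$ explicitly, in particular noting that $L(A,a)(X)=A\otimes X$, $L'(A,a)(Y)=F(A)\otimes Y$, that the action-coherence isomorphisms are $a_{B,X}$ and $\psi$ respectively, and that $\overline F_{(A,a),(B,b)}=a_B$; (2) set $F_{(A,a),X}:=a_X$ and verify it is an isomorphism natural in both arguments, citing the naturality of $a$ for the $X$-variable and the morphism condition in $\cM_l(F)$ for the $(A,a)$-variable; (3) observe that the coherence square (\ref{cohfun}) specialises exactly to the diagram defining $a|b$ in $\cM_l(F)$, hence commutes. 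The only place where anything could go wrong is bookkeeping: making sure the identifications $L'(A,a)(Y)=F(A)\otimes Y$ and $L'_{A,B}=\psi$ are the ones induced by $\overline F$ through the self-action of $\caH$, so that the right-hand column of (\ref{cohfun}) matches the right-hand side of the defining pentagon of $a|b$; once the dictionary is fixed, the two diagrams are literally identical. I therefore expect no genuine obstacle, only the need to state the dictionary carefully enough that the reader sees the two diagrams coincide.
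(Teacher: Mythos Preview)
Your proposal is correct and matches the paper's own proof essentially verbatim: the paper also takes $F_{(A,a),X}:=a_X$ as the $\cM_l(F)$-functor structure and observes that the coherence condition (\ref{cohfun}) is precisely the defining diagram of $a|b$ in $\cM_l(F)$. Your additional remarks on naturality in each variable are a useful elaboration of what the paper leaves implicit.
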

\begin{proof}
The  $\cM_l(F)$-action on $\cG$ is given by $(X,x)(Y) = X\otimes Y$ for $Y\in\cG$, while the $\cM_l(F)$-action on $\caH$ is simply $(X,x)(Z) = F(X)\otimes Z$ for $Z\in\caH$. The composition 

\xymatrix{F((X,x)(Y)) \ar@{=}[r] & F(X\otimes Y) \ar[r]^{x_Y} & F(X)F(Y) \ar@{=}[r] & (X,x)F(Y)}
defines the structure of a $\cM_l(F)$-functor on $F$. The coherence for this structure follows from the definition of tensor product in $\cM_l(F)$. 
\end{proof}

\subsection{Functoriality}
Here we define a correspondence between isomorphisms $c:F\to G$ of functors $F,G:\cG\to\caH$ between monoidal
categories and monoidal functors $\cM_l(c):\cM_l(F)\to\cM_l(G)$ between their multiplicators. For an object $(A,a)$
of $\cM_l(F)$ define $\cM_l(c)(A,a)$ to be $(A,a^c)$ where $a^c$ is defined by the diagram:

\xymatrix{F(A\otimes
X) \ar[r]^{a_X}\ar[d]_{c_{A\otimes X}} & F(A)\otimes F(X) \ar[d]^{c_A\otimes c_X}
\\
G(A\otimes X) \ar[r]^{a^c_X} & G(A)\otimes G(X) }

\begin{prop}
The functor $\cM_l(c):\cM_l(F)\to\cM_l(G)$ defined above is strict monoidal and fits into commutative diagram of monoidal functors:
\begin{equation}\label{dd}
\xymatrix{\cM_l(F) \ar[r]^{\cM_l(c)}\ar[d]\ar[rd] & \cM_l(G)\ar[d]\ar[ld]
\\
\cG & \caH }
\end{equation}
\end{prop}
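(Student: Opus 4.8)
The plan is to verify three things in turn: that $\cM_l(c)$ is a well-defined functor, that it is strict monoidal, and that the triangle of monoidal functors in~(\ref{dd}) commutes. For the first point I would check that if $f:(A,a)\to(B,b)$ is a morphism in $\cM_l(F)$, then the same underlying morphism $f:A\to B$ is a morphism $(A,a^c)\to(B,b^c)$ in $\cM_l(G)$; this is a short diagram chase combining the defining square for $a^c$, the defining square for $b^c$, the compatibility square for $f$ in $\cM_l(F)$, and the naturality of $c$ with respect to $f\otimes X$ and to $f$ (so that $c_A\otimes c_X$ commutes appropriately with $G(f)\otimes G(X)$). Functoriality on composites and identities is immediate since $\cM_l(c)$ acts as the identity on underlying objects and morphisms.

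Next I would prove that $\cM_l(c)$ is strict monoidal, i.e. that $(a|b)^c = a^c|b^c$ for $(A,a),(B,b)\in\cM_l(F)$. The strategy is to build the large prism-shaped diagram whose top face is the defining pentagon-type square for $a|b$ (landing in $\caH$ via $F$), whose bottom face is the defining square for $a^c|b^c$ (landing in $\caH$ via $G$), and whose four vertical faces are: the defining square for $a^c$ applied at $B\otimes X$, the defining square for $b^c$ applied at $X$, the naturality square of $c$ applied to $\phi_{A,B,X}$ in $\cG$, and the naturality square of $\psi$ (the associativity constraint of $\caH$) with respect to the triple $(c_A,c_B,c_X)$. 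Chasing around the prism — top face commutes by definition of $a|b$, the four side faces commute as just listed, hence the bottom face commutes — gives $(a|b)^c = a^c|b^c$, so $\cM_l(c)$ sends tensor products to tensor products on the nose and is strictly monoidal. This prism chase is the main obstacle: it is the bulkiest verification, and one has to be careful about where $\psi$-naturality versus $c$-naturality is invoked, but it is entirely formal, exactly parallel to the corresponding computation for $\cN_l(c)$ in section~\ref{funcnuc}.

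For the triangle~(\ref{dd}), the left forgetful functor $\cM_l(F)\to\cG$ and the right forgetful functor $\cM_l(G)\to\cG$ both send $(A,a)$ to $A$ and are strict semigroupal (indeed strict monoidal in the unital case) by the previous theorem, and $\cM_l(c)$ is the identity on underlying objects, so the left triangle commutes strictly. For the two diagonal arrows to $\caH$, recall $\overline F(A,a) = F(A)$ with semigroupal structure $a_B$, and $\overline G(A,a^c) = G(A)$ with structure $a^c_B$; on objects the composite $\overline G\circ\cM_l(c)$ sends $(A,a)$ to $G(A)$, which is not literally $F(A)$, so the right-hand triangle commutes only up to the monoidal natural isomorphism supplied by $c$ itself, namely $c_A:F(A)\to G(A)$. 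I would note that $c$, being a natural isomorphism $F\to G$, is automatically a monoidal natural isomorphism $\overline F\to \overline G\circ\cM_l(c)$ because the defining square for $a^c$ is exactly the coherence condition $c_A\otimes c_B$ composed with $a^c_B$ equals $a_B$ composed with $c_{A\otimes B}$. Thus~(\ref{dd}) commutes in the appropriate $2$-categorical sense, strictly on the $\cG$-side and via $c$ on the $\caH$-side; stating this is the cleanest reading of the proposition, and I would make that precise rather than claim strict commutativity of the whole square.
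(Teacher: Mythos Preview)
Your approach is essentially the paper's: the core step is the same prism/cube diagram establishing $(a|b)^c = a^c|b^c$, from which strict monoidality follows, and the paper then simply asserts that (\ref{dd}) commutes ``from the definition of $\cM_l(c)$''. Your additional checks --- well-definedness on morphisms, and the observation that the $\caH$-triangle commutes only up to the monoidal natural isomorphism $c$ rather than on the nose --- do not appear in the paper but are correct and welcome precision; the paper is silent on both, and your $2$-categorical reading of (\ref{dd}) is the honest one.
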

\begin{proof}

The following commutative diagram shows that $(a|b)^c = (a^c|b^c)$ for $(A,a),(B,b)\in \cM_l(F)$:

\xymatrix@C=-20pt{ & G(A\otimes(B\otimes X)) \ar'[d][dd]^{a^c_{B\otimes X}}\ar[rr]^{G(\phi_{A,B,X})} & &
G((A\otimes B)\otimes X) \ar[dd]^{(a|b)^c}_{(a^c|b^c)_X}
\\
F(A\otimes(B\otimes X)) \ar[ur]^{c_{A\otimes(B\otimes X)}}\ar[dd]_{a_{B\otimes X}}\ar[rr]^{F(\phi_{A,B,X})}
& & F((A\otimes B)\otimes X) \ar[ur]^{c_{(A\otimes B)\otimes X}}\ar[dd]_{(a|b)_X}
\\
 & G(A)\otimes G(B\otimes X) \ar[dd]_{I\otimes b^c_X} & & G(A\otimes B)\otimes G(X) \ar[dd]^{a^c_B\otimes I}
\\
F(A)\otimes F(B\otimes X) \ar[ur]^{c_A\otimes c_{B\otimes X}}\ar[dd]_{I\otimes b_X} & & F(A\otimes B)\otimes F(X)
\ar[ur]^{c_{A\otimes B}\otimes c_X}\ar[dd]_<<<<{a_B\otimes I}
\\
 & G(A)\otimes(G(B)\otimes G(X)) \ar'[r][rr]^<<<<{\psi_{G(A),G(B),G(X)}} & & (G(A)\otimes G(B))\otimes G(X)
\\
F(A)\otimes(F(B)\otimes F(X)) \ar[rr]_{\psi_{F(A),F(B),F(X)}}\ar[ur]^{c_A\otimes(c_B\otimes c_X)} & & (F(A)\otimes
F(B))\otimes F(X) \ar[ur]^{(c_A\otimes c_B)\otimes c_X} }

This proves that the functor $\cM_l(c)$ is strict monoidal: $$\cM_l(c)((A,a)\otimes (B,b)) = \cM_l(c)(A\otimes B,a|b) =
(A\otimes B,(a|b)^c)$$ coincides with $$\cM_l(c)(A,a)\otimes\cM_l(c)(B,b) = (A,a^c)\otimes (B,b^c) = (A\otimes
B,a^c|b^c).$$ Commutativity of the diagram (\ref{dd}) follows from the definition of $\cM_l(c)$.
\end{proof}

Note that for successive isomorphisms of functors $F\stackrel{c}{\to} G\stackrel{d}{\to} H$ the composition of monoidal functors
$\cM_l(c)$ and $\cM_l(d)$ {\em coincides} with $\cM_l(cd)$. It follows from the fact that $a^{cd} = (a^c)^d$ for
$(A,a)\in\cM_l(F)$.

\subsection{Composition properties}
For a successive (set-theoretic) maps of monoids
\xymatrix{ A\ar[r]^f & B \ar[r]^g & C}
(left) multiplicants form a successive pair of spans (in the category of monoids):

\xymatrix{ & M_l(f) \ar[ld] \ar[rd] & & M_l(g) \ar[ld] \ar[rd] & \\ A & & B & & C}

The composition of spans is given by the pullback (fibered product) of the two middle maps:
$$M_l(f)\times_B M_l(g) = \{ (a,f(a)),\ a\in M_l(f),\ f(a)\in M_l(g) \}.$$ Note that the ``projection" $(a,f(a))\mapsto a$ defines a homomorphism $M_l(f)\times_B M_l(g)\to M_l(gf)$. Indeed, for such $a$ and any $x\in A$ we have that $$gf(ax) = g(f(a)f(x)) = gf(a)gf(x).$$ Moreover, this homomorphism fits into a commutative diagram (morphism of spans):

\xymatrix@C=-10pt{ & & M_l(gf) \ar@/_25pt/[llddd] \ar@/^25pt/[rrddd]  & & \\ & & M_l(f)\times_B M_l(g) \ar[ld] \ar[rd] \ar[u] & & \\ & M_l(f) \ar[ld] \ar[rd] & & M_l(g) \ar[ld] \ar[rd] & \\ A & & B & & C}

Thus multiplicants define an oplax functor from the category of monoids and set-theoretic maps into the bicategory of spans of monoids.

Here we categorify this construction by defining a monoidal functor $\cM_l(F)\times_{\caH}\cM_l(G)\to\cM_l(GF)$ for a
pair of composable functors $\cG\stackrel{F}{\to}\caH\stackrel{G}{\to}\cJ$ between monoidal categories. Here by
$\cM_l(F)\times_{\caH}\cM_l(G)$ we mean the pseudo-pullback in the 2-category of monoidal categories: with objects
$(A,a,B,b,x)$, where $(A,a)\in \cM_l(F),\ (B,b)\in \cM_l(G)$, and $x:F(A)\to B$ being an isomorphism in $\caH$. A morphism $(A,a,B,b,x)\to (A',a',B',b',x')$ is a pair $(f,g)$ of morphisms $f:(A,a)\to (A',a'),\ g:(B,b)\to (B',b')$ in $\cM_l(F),\ \cM_l(G)$ respectively, such that the square: 
\xymatrix{ F(A) \ar[r]^x \ar[d]_{F(f)} & B \ar[d]^g \\ F(A') \ar[r]^{x'} & B' }

commutes. The
tensor product in $\cM_l(F)\times_{\caH}\cM_l(G)$ has the form: $$(A,a,B,b,x)\otimes (A',a',B',b',x') = (A\otimes
A',a|a',B\otimes B',b|b',x|x'),$$ where $x|x'$ is the composition

\xymatrix{ F(A\otimes A') \ar[r]^{a_{A'}} & F(A)\otimes F(A') \ar[r]^>>>>>{x\otimes x'} & B\otimes B'.}

Define a functor $\cM_l(F,G):\cM_l(F)\times_{\caH}\cM_l(G)\to\cM_l(GF)$ by $(A,a,B,b,x)\mapsto (A,\tilde a)$, where
$\tilde a$ is the composition:

\xymatrix{ GF(A\otimes X) \ar[r]^<<<<{G(a_X)} & G(F(A)\otimes F(X)) \ar[rr]^{G(x\otimes F(X))} & & G(B\otimes F(X)) }
\xymatrix{ \ar[r]^>>>>{b_{F(X)}} & G(B)\otimes GF(X) \ar[rr]^{G(x)^{-1}\otimes GF(X)} & & GF(A)\otimes GF(X)}

\begin{prop}
The functor $\cM_l(F,G)$ is strict monoidal. 
\end{prop}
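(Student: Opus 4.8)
The plan is to follow the template of the preceding propositions: reduce strict monoidality to a single identity of structure isomorphisms and prove that identity by one large commuting diagram in $\cJ$ assembled from naturality squares, functoriality of $G$, and the pentagons defining the various $|$-operations. Since a morphism in any $\cM_l(-)$ is just a morphism of the underlying category, and since $\cM_l(F,G)$, the forgetful functors and all the tensor products act on morphisms by $-\otimes-$ in $\cG$, strict monoidality of $\cM_l(F,G)$ reduces to an equality of $\cM_l(GF)$-structures on the object $A\otimes A'$. Writing $\widetilde{(a|a')}$ for the structure that $\cM_l(F,G)$ attaches to the pseudo-pullback tensor product $(A\otimes A',\,a|a',\,B\otimes B',\,b|b',\,x|x')$, and $\widetilde a\mid\widetilde{a'}$ for the tensor product in $\cM_l(GF)$ of the images $(A,\widetilde a)$ and $(A',\widetilde{a'})$, the claim is exactly $\widetilde{(a|a')}=\widetilde a\mid\widetilde{a'}$. (I take it as granted, as in the statement, that $\cM_l(F,G)$ is a functor; that $(A,\widetilde a)\in\cM_l(GF)$ and that morphisms are preserved is a smaller diagram of the same kind.)

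First I would unfold both sides. On the left, $\widetilde{(a|a')}_X$ equals, by the definition of $\cM_l(F,G)$ on objects, $\bigl(G(x|x')^{-1}\otimes GF(X)\bigr)\circ(b|b')_{F(X)}\circ G\bigl((x|x')\otimes F(X)\bigr)\circ G\bigl((a|a')_X\bigr)$; expanding $(a|a')_X$ and $(b|b')_{F(X)}$ by their defining pentagons, using $x|x'=(x\otimes x')\circ a_{A'}$, and using that $G$ preserves composites and inverses, one sees at once that the factor $G(a_{A'}\otimes F(X))$ coming out of $G((x|x')\otimes F(X))$ cancels the factor $G\bigl((a_{A'}\otimes F(X))^{-1}\bigr)$ coming out of $G((a|a')_X)$. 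On the right, $\widetilde a\mid\widetilde{a'}$ is the pentagon defining the tensor product in $\cM_l(GF)$, in which each $\widetilde a_Y=\bigl(G(x)^{-1}\otimes GF(Y)\bigr)\circ b_{F(Y)}\circ G(x\otimes F(Y))\circ G(a_Y)$ (similarly for $\widetilde{a'}$ with $x',a',b'$). After these substitutions both composites share the prefix $G(a_{A'\otimes X})\circ G\bigl(F(\phi_{A,A',X})^{-1}\bigr)$ and the suffix $G(a_{A'})^{-1}\otimes GF(X)$, so it remains to match what lies between.

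I would then display the remaining equation as one pasting diagram in $\cJ$ whose cells are: $G$ applied to the pentagon defining $(a|a')_X$; the pentagon defining $(b|b')_{F(X)}$; the pentagon defining $\widetilde a\mid\widetilde{a'}$; naturality of $b$ and of $b'$ at the morphisms $a'_X$, $x'\otimes F(X)$ and their composite; naturality of the associativity constraint of $\cJ$ with respect to the isomorphisms $G(x)\colon GF(A)\to G(B)$ and $G(x')\colon GF(A')\to G(B')$; $G$ applied to naturality of the associativity constraint of $\caH$ with respect to $x,x'$ (it is this step that makes the copy of that constraint produced by $G$ from the $(a|a')$-pentagon cancel against the copy inside the $(b|b')$-pentagon, leaving $G(x\otimes(x'\otimes F(X)))$); and $G$ applied to the evident bifunctoriality squares of $\otimes$ in $\caH$ together with $G(x)^{-1}G(x)=\mathrm{id}$. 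Every cell commutes for one of these reasons, so tracing the boundary yields $\widetilde{(a|a')}=\widetilde a\mid\widetilde{a'}$. The diagram has essentially the ``$A_\infty$-composition'' shape mentioned after the proposition on $\cN_l(c)$; conceptually it says that conjugating a left multiplicant along the comparison isomorphism $x$ is compatible with tensor products — the categorical shadow of the monoid-level computation $gf(ax)=g\bigl(f(a)f(x)\bigr)=gf(a)\,gf(x)$ whose compatibility with composition of spans was noted earlier. I expect the only delicate point to be organizing the conjugations by $x$ and $x'$: the diagram must be routed so that all occurrences of $G(x^{\pm1})$ and $G(x'^{\pm1})$ cancel in the right order and so that the two associativity constraints — that of $\caH$, to which $G$ is applied, and that of $\cJ$ — are matched through the $(b|b')$-pentagon rather than directly. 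Once that routing is fixed, the rest is routine bookkeeping with naturality and functoriality.
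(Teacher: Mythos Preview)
Your proposal is correct and is essentially the paper's own argument: the paper reduces strict monoidality to the single identity $\widetilde{a|a'}=\tilde a\,|\,\tilde a'$ and verifies it with one large pasting diagram in $\cJ$ whose cells are exactly the ones you list (the $G$-image of the $(a|a')$-pentagon, the $(b|b')$-pentagon, the pentagon defining $\tilde a\,|\,\tilde a'$, naturality of $b,b'$ and of the associativity constraints in $\caH$ and $\cJ$, and bifunctoriality squares carrying the conjugations by $x,x'$). Your observation that the common prefix $G(a_{A'\otimes X})\circ GF(\phi)^{-1}$ and suffix $G(a_{A'})^{-1}\otimes GF(X)$ can be stripped off, together with the cancellation of $G(a_{A'}\otimes F(X))$ against its inverse, is precisely the routing that makes the paper's diagram close up.
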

\begin{proof}
We need to show that $$\cM_l(F,G)((A,a,B,b,x)\otimes(A',a',B',b',x')) = $$ $$= \cM_l(F,G)(A\otimes
A',a|a',B\otimes B',b|b',x|x') = (A\otimes A',\widetilde{a|a'})$$ coincides with $$\cM_l(F,G)(A,a,B,b,x) \otimes\cM_l(F,G)(A',a',B',b',x') = $$ $$= (A,\tilde a)\otimes(A',\tilde a') = (A\otimes A',\tilde a|\tilde a').$$
It follows from the diagram: 

$$
\xygraph{ !{0;/r4.5pc/:;/u6.5pc/::}[]*+{GF(A(A'X))} (
  :[u(.7)r(.6)]*+{GF((AA')X)} ^{GF(\phi)}
  ( :[d(1)l(.3)]*+{G(F(AA')F(X))} ^<<<<<{G((a|a')_X)} |(.7)*+{\hole}
    ( :[d(3)r(.3)]*+{G((BB')F(X))}="ld" ^<<<<<<<<<<<<<<<{G((x|x')F(X))} |(.32){\hole} |(.407){\hole} |(.518){\hole} |(.8){\hole}
      :[rr]*+{G(BB')GF(X)}="md" ^{(b|b')_{F(X)}} |(.66){\hole}
      :[rr]*+{(G(B)G(B'))GF(X)}="rd" ^{b_{B'}GF(X)}
    ,
      :[d(1)l(.3)]*+{G((F(A)F(A'))F(X))}="lm" _{G(a_{A'}F(X))} |(.7)*+{\hole}
      :"ld" _{G((xx')F(X))} |(.12){\hole} |(.34){\hole}  |(.41){\hole} |(.727){\hole}
    )
  ,
    :[rr]*+{GF(AA')GF(X)} ^{(\tilde a|\tilde a')_X} _{(\widetilde{a|a'})_X}
    ( :"md" _<<<<<<<<<{G(x|x')GF(X)}  |(.177){\hole}  |(.263){\hole}  |(.3){\hole} |(.43){\hole} |(.508){\hole} |(.566){\hole} |(.615){\hole} |(.7)*+{\hole} |(.94){\hole}
    ,
      :[rr]*+{(GF(A)GF(A'))GF(X)}="ru" ^{\tilde a_{A'}GF(X)}
      :[d(2.5)]*+{(G(B)GF(A'))GF(X)}="rm" ^{(G(x)GF(A'))GF(X)}
      :"rd" ^<<<<<<<<<{(G(B)G(x'))GF(X)}
    ,
      :[d(1.05)r(.2)]*+{G(F(A)F(A'))GF(X)} ^<<<<<<{G(a_{A'})GF(X)}  |(.66){\hole}
      :[d(1)]*+{G(BF(A'))GF(X)} ^{G(xF(A'))GF(X)} |(.19){\hole} |(.69)*+{\hole}
      ( :"rm" ^{b_{F(A')}GF(X)} |(.7){\hole}
      ,
        :"md" ^<<<<<<<<<<<<<{G(Bx')GF(X)}  |(.106){\hole}  |(.21){\hole} |(.38)*+{\hole}  |(.88){\hole}
      )
    )
  )
,
  :[rr]*+{GF(A)GF(A'X)} ^{\tilde a_{A'X}}
  ( :[rr]*+{GF(A)(GF(A')GF(X))} ^{GF(A)\tilde a'_X}
    ( :"ru" _\psi
    ,
      :[d(2.5)]*+{GF(A)(G(B')GF(X))}="mr" ^{GF(A)(G(x')GF(X))}
      :[d(1.5)]*+{G(B)(G(B')GF(X))}="dr" _<<<<<<<{G(x)(G(B')GF(X))}
      :"rd" _\psi
    )
  ,
    :[d(1.05)r(.2)]*+{GF(A)G(F(A')F(X))} ^{GF(A)G(a'_X)}
    ( :[d(1.05)r(.2)]*+{GF(A)G(B'F(X))} ^{GF(A)G(x'F(X))}
       ( :"mr" ^{GF(A)b'_{F(X)}}
       ,
         :[d(1.9)l(.4)]*+{G(B)G(B'F(X))}="dm" ^{G(x)G(B'F(X))}
         :"dr" _{Bb'_{F(X)}}
       )
    ,
      :[d(1.2)l(.4)]*+{G(B)G(F(A')F(X))}="m" ^>>>>>>>>>>>>{G(x)G(F(A')F(X))}
      :"dm" _>>>>>>{G(B)G(x'F(X))}
    )
  ,
    :[d(1.2)l(.4)]*+{G(B)GF(A'X)}="um" _<<<<<<<<<<{G(x)GF(A'X)}
    :"m" _>>>>>>{G(B)G(a'_X)}
  )
,
  :[d(1)l(.3)]*+{G(F(A)F(A'X))} _{G(a_{A'X})}
  ( :[d(1.1)r(.3)]*+{G(BF(A'X))} ^{G(xF(A'X))}
    ( :"um" _{b_{F(A'X)}}
    ,
      :[d(.9)l(.3)]*+{G(B(F(A')F(X)))}="l" ^>>>>>>>{G(Ba'_X)}
      ( :"m" _>>>>>>>>>{b_{F(A')F(X)}}
      ,
        :[d(1)r(.3)]*+{G(B(B'F(X)))} _{G(B(x'F(X)))}
        ( :"ld" _{G(\phi)}
        ,
          :"dm" _{b_{B'F(X)}}
        )
      )
    )
  ,
    :[d(.9)l(.3)]*+{G(F(A)(F(A')F(X)))} _{G(F(A)a'_X)}
    ( :"lm" _<<<<<<{G(\phi)} |(.7){\hole}
    ,
      :"l" _{G(x(F(A')F(X)))}
    )
  )
)
}$$

\end{proof}

\subsection{Subcategories of multiplicants}

Let $\cG,\ \caH,\ \cC$ be monoidal categories equipped with monoidal functors $F':\cG\to \cC,\ F'':\caH\to \cC$. Let
$F:\cG\to \caH$ be a functor and let $\gamma:F''\circ F\to F'$ be an isomorphism of functors.
Define a full subcategory $\cM_l(\gamma) = \cM_l(F,F',F'',\gamma)$ in $\cM_l(F)$ of objects $(A,a)$ such that the following diagram is commutative

\xymatrix{ F'(A\otimes X) \ar[dd]_{F'_{A,X}} & F''F(A\otimes X) \ar[l]_{\gamma_{A\otimes X}} \ar[d]^{F''(a_X)} \\ & F''(F(A)\otimes F(X)) \ar[d]^{F''_{A,X}} \\ F'(A)\otimes F'(X)& F''F(A)\otimes F''F(X) \ar[l]_{\gamma_A\otimes\gamma_X} }

\begin{theo}
The subcategory $\cM_l(\gamma)$ is closed under the tensor product in $\cM_l(F)$. The natural isomorphism $\gamma$ lifts to a monoidal natural isomorphism $\overline\gamma$ which fills the square
\xymatrix{ \cM_l(\gamma) \ar[r] \ar[d] & \caH \ar[d]^{F''} \\ \cG \ar[r]^{F'} & \cC }
\end{theo}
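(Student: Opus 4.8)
The plan is to mimic the proof of the proposition showing that $\cN_l(F)$ is closed under tensor product in $\cN_l(\cG)$: first prove closure of $\cM_l(\gamma)$ under $\otimes$ by a single large commutative diagram, and then obtain the lift $\overline\gamma$ almost for free from the already-established monoidality of $\overline F\colon\cM_l(F)\to\caH$.

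\emph{Closure under tensor product.} Let $(A,a),(B,b)\in\cM_l(\gamma)$. I must verify that the pair $(A\otimes B,a|b)$ again satisfies the diagram defining $\cM_l(\gamma)$, i.e. that for every $X\in\cG$ the square with sides $F'_{A\otimes B,X}$, $\gamma_{(A\otimes B)\otimes X}$, the composite $F''_{F(A\otimes B),F(X)}\circ F''((a|b)_X)$, and $\gamma_{A\otimes B}\otimes\gamma_X$ commutes. I would assemble the verifying diagram out of the following cells: the pentagon defining $(a|b)_X$ in $\cM_l(F)$ (contributing $F(\phi_{A,B,X})$, $a_{B\otimes X}$, $F(A)\otimes b_X$, $\psi_{F(A),F(B),F(X)}$ and $a_B\otimes F(X)$); the defining diagram of $\cM_l(\gamma)$ for $(A,a)$ taken at the object $B\otimes X$, and for $(B,b)$ taken at $X$; naturality of $\gamma$ along the morphism $\phi_{A,B,X}\colon A\otimes(B\otimes X)\to(A\otimes B)\otimes X$ in $\cG$; naturality of the oplax constraints $F'_{-,-}$ and $F''_{-,-}$ with respect to $a_B$, $b_X$ and the components of $\gamma$; and the associativity coherence axioms of the monoidal functors $F'$ (at $A,B,X\in\cG$) and $F''$ (at $F(A),F(B),F(X)\in\caH$), which are exactly what introduces the associativity constraints of $\cC$ and of $\caH$ and relates $F'_{A\otimes B,X}$ to $F'_{A,B}$ and $F'_{A,B\otimes X}$ (and similarly for $F''$). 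Pasting these cells along their shared edges yields the required square; apart from its size this is completely parallel to the large diagram used for $\cN_l(F)$. Hence $\cM_l(\gamma)$ is closed under $\otimes$ and is a semigroupal (monoidal) subcategory of $\cM_l(F)$.

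\emph{The lift $\overline\gamma$.} By the theorem on $\cM_l(F)$ the assignment $(A,a)\mapsto F(A)$ is a semigroupal (monoidal) functor $\overline F\colon\cM_l(F)\to\caH$ with constraint $\overline F_{(A,a),(B,b)}=a_B$, and the forgetful functor $U\colon\cM_l(F)\to\cG$ is strict semigroupal. Restricting to $\cM_l(\gamma)$, the composites $F''\circ\overline F$ and $F'\circ U$ are two semigroupal (monoidal) functors $\cM_l(\gamma)\to\cC$, with constraints $F''_{F(A),F(B)}\circ F''(a_B)$ and $F'_{A,B}$ respectively. Put $\overline\gamma_{(A,a)}=\gamma_A\colon F''F(A)\to F'(A)$. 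Since $\gamma$ is natural on $\cG$ and every morphism of $\cM_l(\gamma)$ is in particular a morphism of $\cG$, $\overline\gamma\colon F''\circ\overline F\to F'\circ U$ is a natural isomorphism. Its compatibility with the constraints at a pair $(A,a),(B,b)$ is precisely the identity
\[(\gamma_A\otimes\gamma_B)\circ F''_{F(A),F(B)}\circ F''(a_B)=F'_{A,B}\circ\gamma_{A\otimes B},\]
which is the diagram defining $\cM_l(\gamma)$ for $(A,a)$ evaluated at $X=B$, hence holds by hypothesis. Thus $\overline\gamma$ is a monoidal natural isomorphism filling the stated square.

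The main obstacle is purely the bookkeeping of the closure step: laying out the large diagram so that every cell has the correct orientation and bracketing, in particular matching the two invocations of the monoidal-functor associativity axiom (one for $F'$, one for $F''$, involving the associators of $\cC$ and $\caH$) with the single invocation of naturality of $\gamma$ against $F(\phi_{A,B,X})$. No genuinely new idea is needed: once the diagram is drawn, every cell commutes by naturality, by a coherence axiom of $F'$, $F''$, $\cG$, $\caH$ or $\cC$, or by the hypothesis that $(A,a),(B,b)\in\cM_l(\gamma)$.
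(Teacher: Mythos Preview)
Your proposal is correct and follows essentially the same approach as the paper: the closure step is a single pasting of the pentagon defining $(a|b)_X$, the $\cM_l(\gamma)$-conditions for $(A,a)$ at $B\otimes X$ and for $(B,b)$ at $X$, naturality of $\gamma$ along $\phi_{A,B,X}$, and the monoidal-functor coherence hexagons for $F'$ and $F''$; the lift $\overline\gamma_{(A,a)}=\gamma_A$ is then monoidal precisely by the $\cM_l(\gamma)$-condition at $X=B$, exactly as you write.
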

\begin{proof}
The fact that the tensor product $(A\otimes B,a|b)$ of two objects $(A,a),\ (B,b)$ of $\cM_l(\gamma)$ belongs to $\cM_l(\gamma)$ follows from the commutative diagram:

$$
\xygraph{ !{0;/r12.5pc/:;/u6.5pc/::}[]*+{F''F(A(B(X))}
(
 :[d(3)]*+{F'(A(BX))} _{\gamma_{A(BX)}}
 ( :[u(0.6)r(0.4)]*+{F'((AB)X)}="dl" ^{F'(\phi)}
   :[r(0.7)]*+{F'(AB)F'(X)}="dm" ^{F'_{AB,X}} |(.42){\hole}
   :[r(1)]*+{(F'(A)F'(B))F'(X)}="dr" ^{F'_{A,B}F'(X)} |(.6){\hole}
 ,
   :[r(0.7)]*+{F'(A)F'(BX)}="d1" _{F'_{A,BX}}
   :[r(1)]*+{F'(A)(F'(B)F'(X))}="d2" _{F'(A)F'_{B,X}}
   :"dr" _{\psi}
 )
 ,
 :[u(0.6)r(0.4)]*+{F''F((AB)X)} ^{F''F(\phi)}
 ( :"dl" _{\gamma_{(AB)X}} |(.2){\hole}
 ,
   :[r(0.7)]*+{F''(F(AB)F(X))} ^{F''((a|b)_X)}
   (
     :[r(1)]*+{F''((F(A)F(B))F(X))}="r" ^{F''(a_BF(X))}
     :[d(1)]*+{F''(F(A)F(B))F''F(X)}="ru" ^{F''_{F(A)F(B),F(X)}}
     :[d(1)]*+{(F''F(A)F''F(B))F''F(X)}="rm" ^{F''_{F(A),F(B)}F(X)}
     :"dr" ^{(\gamma_A\gamma_B)\gamma_X}
   ,
     :[d(1)]*+{F''(F(AB))F''F(X)} _<<<<{F''_{F(AB),F(X)}} |(.6){\hole}
     ( :"ru" ^{F''(a_B)F''F(X)}
     ,
       :"dm" _{\gamma_{AB}\gamma_X} |(.3){\hole}
     )
   )
 )
 ,
 :[r(0.7)]*+{F''(F(A)F(BX))} _{F''(a_{BX})}
 ( :[r]*+{F''(F(A)(F(B)F(X))}  ^{F''(F(X)b_X}
   ( :"r" ^{F''(\phi')}
   ,
     :[d]*+{F''F(A)F''(F(B)F(X))}="m" ^{F''_{F(A),F(B)F(X)}}
     :[d]*+{F''F(A)(F''F(B)F''F(X))} _{F''F(A)F''_{F(B),F(X)}}
     ( :"rm" _{\psi}
     ,
       :"d2" _>>>>>>{\gamma_A(\gamma_B\gamma_X)}
     )
   )
 ,
   :[d]*+{F''F(A)F''F(BX)} _{F''_{F(A),F(BX)}}
   ( :"m" _{F''F(A)F''(b_X)}
   ,
     :"d1" _{\gamma_A\gamma_{BX}}
   )
 )
)
}$$

It follows from the definition of $\cM_l(\gamma)$ that the natural transformation $\overline\gamma$ given by $\overline\gamma_(A,a)=\gamma_A:F''F(A)\to F'(A)$ is monoidal.
\end{proof}

\section{Examples, categories of modules}

Here $k$ denote the ground field.

\subsection{Nuclei and multiplicants of categories of modules}

We start with the following technical statement.
\begin{lem}\label{enm}
Let $M$ be a left module over an associative algebra $R$ and $m_{X_1,...,X_n}\in End(M\otimes X_1\otimes
...\otimes X_n)$ be a family of linear operators, which is
natural in left $R$-modules $X_i$. Then $m_{X_1,...,X_n}$ is given by multiplication by an element $m\in End(M)\otimes R\otimes
...\otimes R$. Families of invertible operators correspond to invertible elements in the algebra $End(M)\otimes
R\otimes ...\otimes R$.
\end{lem}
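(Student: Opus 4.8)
The plan is to identify a natural family $m_{X_1,\dots,X_n}$ with an element of $\End(M)\otimes R^{\otimes n}$ by evaluating naturality at the free modules. First I would treat the case $n=1$, since the general case is a routine iteration of the same argument. So suppose $m_X \in \End(M\otimes X)$ is natural in the left $R$-module $X$. The key observation is that $R$ itself is a left $R$-module (the regular module), and for any module $X$ and any $x\in X$ there is an $R$-linear map $\rho_x:R\to X$ sending $1\mapsto x$ (namely $r\mapsto rx$). Conversely every $R$-linear map $R\to X$ arises this way, so naturality with respect to all such $\rho_x$ pins down $m_X$ in terms of $m_R$.

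Concretely, I would set $\tilde m := m_R(-\otimes 1)\colon M \to M\otimes R$, i.e. apply $m_R$ to elements of the form $\mu\otimes 1$ with $\mu\in M$; write $\tilde m(\mu) = \sum_i \nu_i\otimes r_i$. Naturality applied to the square for $\rho_x\colon R\to X$ gives, for every $\mu\in M$ and $x\in X$,
$$
m_X(\mu\otimes x) = m_X\bigl((M\otimes\rho_x)(\mu\otimes 1)\bigr) = (M\otimes\rho_x)\bigl(m_R(\mu\otimes 1)\bigr) = \sum_i \nu_i\otimes r_i x .
$$
Thus $m_X$ is multiplication (in the last tensor factor) by the element $m = \sum_i \nu_i\otimes r_i \in \End(M)\otimes R$, where $\nu_i$ is regarded as the rank-one operator determined together with $\mu$ — more precisely one rewrites $\mu\mapsto\sum_i\nu_i\otimes r_i$ as a single element of $\End(M)\otimes R$ by linearity in $\mu$, which is legitimate because $m_R$ is a linear operator on $M\otimes R$ and $M\otimes R \cong \mathrm{Hom}_k(M,M\otimes R)$-valued assignments of this form are exactly $\End(M)\otimes R$. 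The general $n$ follows by the same device applied in each of the $n$ module slots: use the universal maps $\rho_{x_j}\colon R\to X_j$, evaluate $m_{R,\dots,R}$ on $M\otimes 1\otimes\cdots\otimes 1$, and transport along $M\otimes\rho_{x_1}\otimes\cdots\otimes\rho_{x_n}$.

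For the last sentence: if the family $m$ is invertible, its inverse is again a natural family (naturality of $m^{-1}$ follows by inverting the naturality squares of $m$), hence corresponds to some element $m'\in\End(M)\otimes R^{\otimes n}$; composing, $m m'$ and $m'm$ act as the identity family, which corresponds to the unit of the algebra $\End(M)\otimes R^{\otimes n}$, so $m$ is a unit. Conversely any invertible element obviously gives an invertible family. The one point needing a little care — the main obstacle, such as it is — is justifying that the assignment $\mu\mapsto m_R(\mu\otimes 1)$ really does assemble into an element of $\End(M)\otimes R^{\otimes n}$ rather than merely a linear map $M\to M\otimes R^{\otimes n}$; this is immediate when $M$ is finite-dimensional, and in general one should either restrict to that case or note that the image of $m_R(-\otimes 1\otimes\cdots\otimes 1)$ lands in $M\otimes R^{\otimes n}$ with the $R^{\otimes n}$ part generated by finitely many elements, so that only a finite-rank piece of $\End(M)$ is involved for each, giving the required element of the algebraic tensor product $\End(M)\otimes R^{\otimes n}$.
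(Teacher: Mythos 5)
Your proof is correct and takes essentially the same route as the paper's: both reduce everything to the regular module $R$ (the paper says ``$R$ is a generator'' and then computes the commutant of $1\otimes End_R(R)\otimes\cdots\otimes End_R(R)$, while you unwind exactly that by evaluating at $\mu\otimes 1\otimes\cdots\otimes 1$ and transporting along the $R$-linear maps $\rho_{x_j}:R\to X_j$). The finite-dimensionality caveat you flag at the end is a genuine subtlety --- $\mathrm{Hom}_k(M,M\otimes R^{\otimes n})$ is strictly larger than $End(M)\otimes R^{\otimes n}$ when $M$ and $R$ are both infinite-dimensional --- but the paper's own proof silently elides the same point, and the lemma is only invoked in settings where it causes no harm.
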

\begin{proof}
The $R$-module $R$ is a generator in $R$-$Mod$. Thus any transformation $m_{X_1,...,X_n}$ natural in $X_i\in R$-$Mod$ is defined by its specialisation $m_{R,...,R}\in End(M\otimes R\otimes
...\otimes R)$. Naturality of $m$ implies that $m_{R,...,R}$ commutes with $1\otimes End_R(R)\otimes...\otimes End_R(R)$. Thus $m_{R,...,R}$ is given by multiplication by an element $m\in End(M)\otimes R\otimes
...\otimes R$.
\end{proof}

\begin{exam}Nucleus of a category of modules.
\end{exam}
Let $R$ be an algebra and $\Delta:R\otimes R\to R$ a homomorphism of algebras defining a magmoidal structure on
the category $R$-$Mod$ of left $R$-modules. By lemma \ref{enm} the nucleus $\cN_l(R$-$Mod)$ is the category of
pairs $(M,m)$ where $M$ is a left $R$-module and $m$ is an invertible element of the tensor product $End(M)\otimes
R\otimes R$ satisfying
\begin{equation}\label{inv}
(\Delta\otimes I)\Delta(r)m = m(I\otimes\Delta)\Delta(r)
\end{equation}
for all $r\in R$. Here $End(M)$ is the endomorphism ring of the vector space $M$. Tensor product on the category
$\cN_l(R$-$Mod)$ is defined by $(M,m)\otimes(N,n) = (M\otimes N,m|n)$ where $m|n\in End(M)\otimes End(N)\otimes
R\otimes R$ is given by
\begin{equation}\label{tenpr}
(m|n) = (m\otimes 1)(I\otimes\Delta\otimes I)(m)(1\otimes n)(I\otimes I\otimes\Delta)(m)^{-1}.
\end{equation}
The tensor product is semigroupal with the associativity constraint: $$\phi_{(M,m),(N,n),(L,l)} = (I\otimes\rho_N\otimes\rho_L)(m).$$ Here $\rho_N:R\to End(N),\ \rho_L:R\to End(L)$ are corresponding $R$-module structures.

If $\varepsilon:R\to k$ is a homomorphism of algebras ({\em counit}) such that $$(\varepsilon\otimes I)\Delta = I = (I\otimes\varepsilon)\Delta,$$ the magmoidal category $R$-$Mod$ has a unit object $(k,1)$. The monoidal subcategory $\cN^1_l(R$-$Mod)$ consists of those pairs $(M,m)$, where $m$ satisfies to the {\em normalisation} condition: $$(I\otimes\varepsilon\otimes I)(m) = 1 = (I\otimes I\otimes\varepsilon)(m).$$

In the case when $\Delta$ is coassociative (when $(R,\Delta)$ is a bialgebra) the equation (\ref{inv}) means that
$m$ is $R$-invariant with respect to the diagonal inclusion of $R$ into $End(M)\otimes R\otimes R$.
The monoidal functor $R$-$Mod\to \cN_l(R$-$Mod)$ corresponding to the canonical (trivial) associativity constraint on $R$-$Mod$, sends an $R$-module $M$ into a pair $(M,1)$.

\begin{exam}Multiplicant of a homomorphism.
\end{exam}
Let $H_1, H_2$ be bialgebras and let $f:H_1\to H_2$ be a homomorphism of algebras. It defines a functor
$f^*:H_2$-$Mod\to H_1$-$Mod$ between monoidal categories. Here we describe its (left) multiplicant $\cM_l(f^*)$.
Objects of $\cM_l(f^*)$ are pairs $(M,m)$ where $M$ is $H_2$-module and $m$ is an invertible element of
$End(M)\otimes H_2$ satisfying
\begin{equation}\label{multbialg}
m\Delta(f(h)) = (f\otimes f)\Delta(h)m
\end{equation}
for all $h\in H_1$. Tensor product of
pairs is given by the formula $(M,m)\otimes (N,n) = (M\otimes N,m|n)$ where $m|n\in End(M)\otimes End(N)\otimes
H_2$ is defined by $$(m|n) = (m\otimes 1)^{-1}(1\otimes n)(I\otimes\Delta)(m).$$

The forgetful functor $\cM_l(f^*)\to H_2$-$Mod$ sends a pair $(M,m)$ to $M$. The functor $f^*:\cM_l(f^*)\to H_1$-$Mod$, which sends $(M,m)$ into $f^*(M)$ is semigroupal with respect to the constraint: $$f^*_{(M,m),(N,n)} = (I\otimes\rho_N)(m)\in End(M)\otimes End(N).$$

Let $F_i:H_i-Mod\to \Vect$ be the forgetful functors. An isomorphism $\gamma:F_1\to F_2f^*$ is given by multiplication with an invertible $x\in H_2$. The subcategory $\cM_l(\gamma)$ of the multiplicant $\cM_l(f^*)$ consists of pair $(M,m)$ such that $$m = (\rho_M\otimes I)(\Delta(x)^{-1}(x\otimes x)),$$ where $\rho_M:H_2\to End(M)$ is the $H_2$-action on $M$. Note that this together with (\ref{multbialg}) is equivalent to the condition: $$(\rho_M\otimes I)(\Delta(g(h))) = (\rho_M\otimes I)((g\otimes g)\Delta(h)),\quad h\in H_1,$$ where $g:H_1\to H_2$ is given by $g(h) = xf(h)x^{-1}$.

\subsection{Algebraic constructions}

For an associative algebra $R$ consider the category $\E(R)$ of pairs $(V,v)$ where $V$ is a vector space and $v$ is
an element of $End(V)\otimes R$ with morphisms being vector space maps compatible with second components: a
morphism $(V,v)\to(U,u)$ is a map $f:V\to U$ such that the equality
\begin{equation}\label{coco}
u(f\otimes 1) = (f\otimes 1)v
\end{equation}
is valid in $Hom(V,U)\otimes R$. Denote by $\A(R)$ the full
subcategory of $\E(R)$ of those pairs $(V,v)$ for which $v$ is invertible in $End(V)\otimes R$.
\begin{lem}
Let $R$ be a finite dimensional associative algebra. Then the category $\E(R)$ is equivalent to the category of
modules over the free algebra $T(R^\sve)$ generated by the dual space of $R$. The category $\A(R)$ is equivalent
to the category of modules over the quotient of the free algebra $T(R^\sve\oplus R^\sve)$ by the ideal generated
by $\{ \delta'(l)-l(1), \delta"(l)-l(1); l\in R^\sve\}$ where $\delta',\delta"$ are the compositions of
$\delta:R^\sve\to R^\sve\otimes R^\sve$, which is the dual map to the multiplication $R\otimes R\to R$, with two
inclusions of $R^\sve\otimes R^\sve$ into $(R^\sve\oplus R^\sve)^{\otimes 2}$.
\end{lem}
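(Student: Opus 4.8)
The plan is to translate both categories into module categories using the standard device available because $R$ is finite dimensional: the evaluation pairing gives a linear isomorphism $End(V)\otimes R\to Hom(R^\sve,End(V))$, $\phi\otimes r\mapsto(l\mapsto l(r)\phi)$; write $\hat v\colon R^\sve\to End(V)$ for the image of $v\in End(V)\otimes R$. By the universal property of the tensor algebra, a linear map $\hat v\colon R^\sve\to End(V)$ is the same thing as an algebra homomorphism $T(R^\sve)\to End(V)$, i.e. a $T(R^\sve)$-module structure on $V$. So, for a fixed underlying space, objects of $\E(R)$ correspond bijectively to $T(R^\sve)$-modules.

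For the morphisms I would unwind (\ref{coco}). Viewing $Hom(V,U)\otimes R$ as an $(End(U)\otimes R,\,End(V)\otimes R)$-bimodule and tracing through the above identification, $u(f\otimes 1)$ becomes the map $l\mapsto\hat u(l)\circ f$ and $(f\otimes 1)v$ becomes $l\mapsto f\circ\hat v(l)$. Hence (\ref{coco}) is equivalent to $\hat u(l)f=f\hat v(l)$ for every $l\in R^\sve$, which says exactly that $f$ commutes with the action of the generators, i.e. is a map of $T(R^\sve)$-modules. This establishes the first assertion (in fact the two categories are isomorphic, not just equivalent).

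For $\A(R)$ the only extra condition is that $v$ be invertible in the algebra $End(V)\otimes R$; write $w=v^{-1}$. Then $(\hat v,\hat w)$ is a linear map $R^\sve\oplus R^\sve\to End(V)$, hence an algebra homomorphism $T(R^\sve\oplus R^\sve)\to End(V)$ under which the first copy of $R^\sve$ acts by $\hat v$ and the second by $\hat w$. I must identify the relations that single out exactly the module structures of this form, i.e. those for which the two ``colours'' are mutually inverse in $End(V)\otimes R$. Since $\delta$ is dual to the multiplication of $R$, writing $\delta(l)=\sum_k l'_k\otimes l''_k$ we have $l(rt)=\sum_k l'_k(r)l''_k(t)$; a short computation then shows that the product $vw\in End(V)\otimes R$ corresponds to the map $l\mapsto\sum_k\hat v(l'_k)\hat w(l''_k)$, that $wv$ corresponds to $l\mapsto\sum_k\hat w(l'_k)\hat v(l''_k)$, and that $1_V\otimes 1_R$ corresponds to $l\mapsto l(1)1_V$. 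Under the identification, $vw=1$ therefore says that the element $\delta'(l)\in(R^\sve\oplus R^\sve)^{\otimes 2}\subset T(R^\sve\oplus R^\sve)$ --- the image of $\delta(l)$ under the inclusion $R^\sve\otimes R^\sve\hookrightarrow(R^\sve\oplus R^\sve)^{\otimes 2}$ placing the first factor in the first copy and the second factor in the second copy --- acts as the scalar $l(1)$, while $wv=1$ says the analogous thing for $\delta''(l)$, the image under the inclusion with the two copies interchanged. So $(V,v)\in\A(R)$ is precisely a $T(R^\sve\oplus R^\sve)$-module on which every $\delta'(l)-l(1)$ and every $\delta''(l)-l(1)$ acts by zero, i.e. a module over the stated quotient algebra.

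Finally I would verify that morphisms match. A morphism in $\A(R)$ is a morphism in $\E(R)$, so it satisfies $\hat u(l)f=f\hat v(l)$; multiplying the bimodule identity (\ref{coco}) on the left by $u^{-1}$ and on the right by $v^{-1}$ shows it also satisfies the corresponding identity for $w=v^{-1}$, hence $f$ commutes with both colours of generators and is a map of modules over the quotient, and conversely. The two assignments are mutually inverse on objects and morphisms, which gives the claimed equivalence (in fact isomorphism) of categories. The step that needs care is purely bookkeeping: fixing the order in which successive tensor factors of $T(R^\sve\oplus R^\sve)$ act on a module, matching the two inclusions $R^\sve\otimes R^\sve\hookrightarrow(R^\sve\oplus R^\sve)^{\otimes 2}$ with $\delta'$ and $\delta''$, and checking that it is the presence of \emph{both} families of relations that forces $w$ to be a genuine two-sided inverse of $v$ rather than merely a one-sided one.
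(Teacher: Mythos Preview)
Your argument is correct and follows essentially the same route as the paper's own proof: identify $v\in End(V)\otimes R$ with a linear map $R^\sve\to End(V)$ via finite-dimensionality, invoke the universal property of the tensor algebra, and for $\A(R)$ translate the equations $vw=1$ and $wv=1$ into the vanishing of $\delta'(l)-l(1)$ and $\delta''(l)-l(1)$ using that $\delta$ is dual to multiplication. Your treatment is in fact more complete than the paper's, which only records the computation in one direction for $\A(R)$ and omits the converse and the morphism check; your final paragraph supplies exactly those missing pieces.
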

\begin{proof}
An element $v$ of $End(V)\otimes R$ corresponds to a map $f_v:R^\sve\to End(V)$ sending $l\in R^\sve$ into $(I\otimes
l)(v)$, and thus a $T(R^\sve)$-module structure on $V$. Clearly compatibility condition (\ref{coco}) guarantees
that a morphisms of pairs is $T(R^\sve)$-linear. In the case when $v$ is invertible the maps $f_v,f_{v^{-1}}$
satisfy to the conditions: $$(f_v\otimes f_{v^{-1}})\delta(l) = (I\otimes\delta(l)(v_{12}v^{-1}_{13}) = (I\otimes
l)(vv^{-1}) = l(1),$$ $$(f_{v^{-1}}\otimes f_v)\delta(l) = (I\otimes\delta(l)(v^{-1}_{12}v_{13}) = (I\otimes
l)(v^{-1}v) = l(1).$$
\end{proof}

Let $R = k(G)$ be an algebra of functions on a finite group $G$ with standard multiplication $\Delta$ given by
$\Delta(l)(f\otimes g) = l(fg)$ for $l\in k(G)$ and $f,g\in G$. By $p_g\in k(G)$ we denote the $\delta$-function
concentrated at $g\in G$. Note that $\delta$-functions $p_g, g\in G$ form a basis in $k(G)$ so we can write $m\in
End(M)\otimes k(G)\otimes k(G)$ as $\sum_{f,g\in G}m(f,g)\otimes p_f\otimes p_g$. The element $m$ is invertible if
$m(f,g)\in End(M)$ are invertible for any $f,g$. $k(G)$-invariance of $m$ (\ref{inv}) means that $m(f,g)$ commutes
with the image of $k(G)$ in $End(M)$. Thus we can identify the nucleus $\cN_l(k(G)$-$Mod)$ with the category
$N_l(k(G))$-$Mod$ of left modules over the algebra $N_l(k(G)) = k(G)\otimes k[F(G\times G)]$ which is a tensor
product of $k(G)$ and the group algebra of the free group $F(G\times G)$ generated by the cartesian product
$G\times G$. Generators of $F(G\times G)$ will be denoted by $u(f,g)$. The pair $(M,m)$ corresponds to the
$N_l(k(G))$ action on $M$ sending $u(f,g)$ into $m(f,g)$. The formula (\ref{tenpr}) for the tensor product of two
pairs gives $$(m|n)(f,g) = \sum_{h\in G}m(h,f)m(hf,g)m(h,fg)^{-1}\otimes p_hn(f,g)$$ which corresponds to the
following comultiplication on $N_l(k(G))$ $$\Delta(u(f,g)) = \sum_{h\in G}u(h,f)u(hf,g)u(h,fg)^{-1}\otimes
p_hu(f,g).$$ Note that $k(G)$ is a sub-bialgebra of $N_l(k(G))$. The coproduct on $N_l(k(G))$ is not coassociative
but rather quasi-associative in the sense of \cite{dr}: $$(I\otimes\Delta)(x) = \Phi(\Delta\otimes
I)\Delta(x)\Phi^{-1}$$ where $\Phi = \sum_{f,g\in G}u(f,g)\otimes p_f\otimes p_g$ is an invertible element of
$N_l(k(G))^{\otimes 3}$ ({\em associator}).

It is well known that associativity constraints on the category $k(G)-Mod$ are in one-to-one correspondence with
3-cocycles $Z^3(G,k^*)$ of the group $G$ with coefficients in invertible elements of the ground field $k$. For any
3-cocycle $\alpha\in Z^3(G,k^*)$ there is defined a homomorphism of quasi-bialgebras $N_l(k(G))\to k(G)$ splitting
the inclusion $k(G)\to N_l(k(G))$ which sends $u(f,g)$ into $\sum_{h\in G}\alpha(h,f,g)p_h$.


\begin{thebibliography}{11}

\bibitem{dr}
V. Drienfeld,
Quasi-Hopf algebras. (Russian) Algebra i Analiz 1 (1989), no. 6, 114--148; translation in Leningrad Math. J. 1 (1990), no. 6, 1419--1457. 
%
\bibitem{st}
J. Stasheff, $H$-spaces from a homotopy point of view. Lecture Notes in Mathematics, Vol. 161 Springer-Verlag, Berlin-New York 1970, 95 pp. 
%

\end{thebibliography}
\end{document}